\renewcommand{\epsilon}{\varepsilon}
\newcommand{\szego}{Szeg\H{o} }
\newcommand{\kahler}{K\"ahler }
\newcommand{\R}{{\mathbb R}}
\newcommand{\C}{{\mathbb C}}
\newcommand{\dbar}{\bar\partial}
\newcommand{\ddbar}{\partial\dbar}
\newcommand{\E}{{\mathbf E}}
\renewcommand{\phi}{\varphi}
\newcommand{\dcal}{\mathcal{D}}
\newcommand{\ecal}{\mathcal{E}}
\newcommand{\hcal}{\mathcal{H}}
\newcommand{\ocal}{\mathcal{O}}
\newcommand{\scal}{\mathcal{S}}
\newcommand{\La}{\Lambda}
\newcommand{\de}{\delta}
\newcommand{\De}{\Delta}
\newtheorem{theo}{{\sc Theorem}}[section]
\newtheorem{maintheo}{{\sc Theorem}}
\newtheorem{cor}[theo]{{\sc Corollary}}
\newtheorem{maincor}[maintheo]{{\sc Corollary}}
\newtheorem{lem}[theo]{{\sc Lemma}}
\newtheorem{prop}[theo]{{\sc Proposition}}
\newenvironment{rem}{\medskip\noindent{\it Remark:\/} }{\medskip}
\title[Real and complex zeros of Riemannian random waves]
{Real and complex zeros of Riemannian random waves }
\author{Steve Zelditch }
\address{Department of Mathematics, Johns Hopkins University, Baltimore, MD
21218, USA}
\thanks{Research  partially supported by NSF grant  DMS-0603850.}
\date{\today}
\begin{document}

\begin{abstract} We show that the expected limit distribution of
the real zero set of a Gaussian random linear combination of
eigenfunctions with frequencies from a short interval
(`asymptotically fixed frequency') is uniform with respect to the
volume form of a compact Riemannian manifold $(M, g)$. We further
show that the complex zero set of the analytic continuations of
such Riemannian random waves to a Grauert tube in the
complexification of $M$ tends to a limit current.

\end{abstract}

 \maketitle

This article is concerned with the real and complex zero sets  of
{\it Riemannian random waves} on a real analytic Riemannian
manifold $(M, g)$.   To define Riemannian random waves, we  fix an
orthonormal basis $\{\phi_{\lambda_j}\}$ of real-valued
eigenfunctions of the Laplacian $\Delta_g$ of $(M, g)$,
$$\Delta_g \phi_{\lambda_j} = \lambda_j^2 \phi_{\lambda_j},
  \;\; \langle \phi_{\lambda_j}, \phi_{\lambda_k}
\rangle = \delta_{jk}, $$ and define Gaussian ensembles of
 random functions   $f = \sum_j c_j \phi_{\lambda_j}$ of the
 following two types:
\begin{itemize}

\item The asymptotically  {\it fixed frequency} ensemble
$\hcal_{I_{\lambda}}$,  where $I_{\lambda} = [\lambda, \lambda +
1]$ and where  $\hcal_{I_{\lambda}}$ is the vector space of linear
combinations
\begin{equation} \label{psi} f_{\lambda} = \sum_{j: \lambda_j
\in [\lambda, \lambda + 1]} c_j\;\; \phi_{\lambda_j},
\end{equation}  of eigenfunctions  with $\lambda_j$ (the
frequency) in an interval $[\lambda, \lambda + 1]$ of fixed width.
(Note that it is the square root of the eigenvalue of $\Delta$,
not the eigenvalue,  which is asymptotically fixed).

\item The {\it cut-off} ensembles $\hcal_{[0, \lambda]}$ where the
frequency is cut-off at $\lambda$:
\begin{equation} \label{psi2} f_{\lambda} = \sum_{j: \lambda_j
\leq \lambda } c_j\;\; \phi_{\lambda_j},
\end{equation}

\end{itemize}

 By {\it random}, we mean  that the coefficients $c_j$ are
independent Gaussian random variables with mean zero and with the
variance defined so that the expected $L^2$ norm of $f$  equals
one. Equivalently, the real vector spaces $\hcal_{[0, \lambda]}, $
resp. $\hcal_{I_{\lambda}}$ are endowed with the inner product
$\langle u, v \rangle = \int_M u v dV_g$ (where $dV_g$ is the
volume form of $(M, g)$) and random means that we equip the vector
spaces  with the induced Gaussian measure. Our main results given
the asympototic distribution of real and complex zeros of such
Riemannian random waves in the high frequency limit $\lambda \to
\infty$.

 The real zeros are straightforward to define. For each $f_{\lambda} \in \hcal_{[0, \lambda]}$ or
 $ \hcal_{I_{\lambda}}$ we
 associated to
the zero set $Z_{f_{\lambda}} = \{x \in M: f_{\lambda} (x) = 0\}$
 the positive measure
\begin{equation} \label{LINSTAT} \langle |Z_{f_{\lambda}}|, \psi \rangle = \int_{Z_{f_{\lambda}}}
\psi d\hcal^{n - 1}, \end{equation} where $d\hcal^{m-1}$ is the
induced (Hausdorff) hypersurface measure. Our  first result
(Theorem \ref{ZMEASURE}) shows that the  normalized expected limit
distribution $\frac{1}{\lambda} \E |Z_{f_{\lambda}}|$ of zeros of
random Riemannian waves  tends to the volume form $dV_g$  as
$\lambda \to \infty$. The result is the same in the cutoff and
fixed frequency ensembles, but the proof is simpler in the former.
Further, in the fixed frequency case, if one chooses a random
sequence
 $f_{\lambda_N}$, i.e. the elements  are chosen independently from the
frequency intervals $[N, N + 1]$ and at random from $\hcal_{[N, N
+ 1]}$, then almost surely, $\frac{1}{N} \sum_{k = 1}^N
\frac{1}{\lambda_k} |Z_{f_{\lambda_k}}| \to dV_g. $

The  {\it complex zeros} are defined by analytic continuation.
Since $(M, g)$ is assumed to be real analytic, it admits a
complexification $M_{\C}$ and  the eigenfunctions can be
analytically continued to  $M_{\C}$. Their real zero hypersurfaces
extend to  complex nodal hypersurfaces in $M_{\C}$. Our second
result (Theorem \ref{ZERORAN}) determines the limit distribution
of these random complex nodal hypersurfaces, and  shows that the
limit distribution is the same as the one determined in \cite{Z3}
for complex zeros of analytic continuations of ergodic
eigenfunctions, e.g. eigenfunctions of $\Delta_g$  when the
geodesic flow of $(M, g)$ is ergodic. This corroborates the random
wave hypothesis as applied to complex nodal sets, i.e. the
conjecture  that eigenfunctions of chaotic systems resemble random
waves. Note that
 in the real domain, the
distribution of nodal lines of eigenfunctions in the ergodic case
is very much an open problem. The motivation to study  complex zeros
of analytic continuations of eigenfunctions comes from the fact one has more control
over complex zeros than real zeros of (deterministic) eigenfunctions.

\subsection{Statement of results on real zeros}

We now state our results more precisely.  Let us first consider
the real zero sets of eigenfunctions of $\Delta$ for the  standard
sphere $(S^m, g_0)$. Let ${\mathcal H}_N \subset L^2(S^m)$ denote
the real $d_N$-dimensional  inner product  space  of spherical
harmonics of degree $N$. The eigenvalue is   given by
$$(\lambda_N^{S^m})^2 = N (N + m - 1) = (N + \frac{\beta}{4})^2  - (\frac{\beta}{4})^2$$ where
$\beta = \frac{m - 1}{2}$ is the common Morse index of the $2 \pi$
periodic geodesics and $$d_N = { m  + N - 1 \choose N}
  - { m + N - 3 \choose N -
2}.$$

We choose an orthonormal basis $\{\phi_{N j}\}_{j = 1}^{d_N}$ for
$\hcal_N$. For instance, on $S^2$ one can choose the real and imaginary parts of the standard
$Y^N_m$'s. In the fixed frequency ensemble, we  endow the real
vector space $\mathcal{H}_N$ with the Gaussian probability measure
 $\gamma_N$ defined by
\begin{equation}\label{gaussian}\gamma_N(f)=
\left(\frac{d_N}{\pi }\right)^{d_{N}/2}e^ {-d_{N}|c|^2}dc\,,\qquad
f=\sum_{j=1}^{d_{\lambda}}c_j \phi_{N j}, \,\;\; d_{N} = \dim
\hcal_{N}.
\end{equation} Here,  $dc$
is $d_{N}$-dimensional real Lebesgue measure. The normalization is
chosen so that $\E_{\gamma_N}\; \langle f,f\rangle=1$, where
$\E_{\gamma_N}$ is the expected value with respect to $\gamma_N$.
Equivalently,  the $d_N$ real variables $\ c_j$
($j=1,\dots,d_{N}$) are independent identically distributed
(i.i.d.) random variables with mean 0 and variance
$\frac{1}{2d_{N}}$; i.e.,
$$\E_{\gamma_N} c_j = 0,\quad  \E_{\gamma_N} c_j c_k =
\frac{1}{2 d_N}\de_{jk}\,.$$ We note  that the Gaussian ensemble
is equivalent to picking $f_N \in \hcal_{N}$ at random from the
unit sphere in $\hcal_{N}$ with respect to the $L^2$ inner
product. The latter description is more intuitive but it is
technically more convenient to work with Gaussian measures. In the
cutoff ensemble, we put the  product  Gaussian measure $\Pi_{n =
1}^N \gamma_n$ on $\bigoplus_{n = 1}^N \hcal_n$.

We now consider the analogous constructions on a general compact
Riemannian manifold $(M, g)$ of dimension $m$.  As mentioned
above, and as defined more precisely in \S \ref{RIEMRW},   the
analogue of the space $\hcal_N$ of spherical harmonics of degree
$N$ is played by the space $\hcal_{I_{N}}$ of linear combinations
of eigenfunctions (\ref{psi}) with frequencies in an interval $I_N
: = [N, N + 1]$. The precise decomposition of $\R$ into intervals
is not canonical on a generic Riemannian manifold and the results
do not depend on the choice. We choose $I_N = [N, N + 1]$ only for
notational simplicity.  In the special case of Zoll manifolds (all
of whose geodesics are closed), there is a canonical choice (an
eigenvalue cluster decomposition) which is described in \S
\ref{RIEMRW}). Henceforth we abbreviate $\hcal_N = \hcal_{I_N}$ on
general Riemannian manifolds. We continue to denote by $\{\phi_{N
j} \}_{j = 1}^{d_N}$ an orthonormal basis of $\hcal_N$  where $d_N
= \dim \hcal_N$.   We equip it with the Gaussian measure
(\ref{gaussian}) and again denote the expected value with respect
to $(\hcal_N, \gamma_N)$ by $\E_{\gamma_N}$.

Our main result on real Riemannian random waves is to determine
the expected value $\E_{\gamma_N} |Z_{f_N}|$. It  is a positive
measure satisfying
\begin{equation} \label{EZ} \langle \E_{\gamma_N} |Z_{f_N}|, \psi
\rangle = \E_{\gamma_N} X_{\psi}^N, \end{equation} where
$X_{\psi}^N$ is a  `linear statistic', i.e. the random variable
\begin{equation} \label{LINSTAT2} X_{\psi}^N (f_N) = \langle \psi, |Z_{f_N}| \rangle, \;\;\; \psi \in C(M) \end{equation} considered in (\ref{LINSTAT}).

\begin{maintheo}\label{ZMEASURE} Let $(M, g)$ be a compact Riemannian
manifold,let $\hcal_{[0, \lambda]}$ be the cutoff ensemble  and
let $({\mathcal H}_{N}, \gamma_{N})$ be the ensemble of Riemannian
waves of asymptotically fixed frequency. Then in either ensemble:
\begin{enumerate}

\item For any $C^{\infty}$ $(M, g)$,  $\lim_{N \to \infty}
\frac{1}{N} {\bf E}_{\gamma_N} \langle |Z_{f_N}|, \psi \rangle =
\int_M \psi dV_g$.

\item  For a real analytic $(M, g)$, $Var(\frac{1}{N} X^N_{\psi} )
) \leq C. $

\end{enumerate}

\end{maintheo}

We restrict to real analytic metrics in (2) for the sake of
brevity. In that case, the  variance estimate follows easily from
a result of  Donnelly-Fefferman on volumes of real nodal
hypesurfaces of real analytic $(M, g)$.

In the case of the standard metric $g_0$  on $S^m$, it is obvious
apriori that  $\E_{\gamma_N} |Z_{f_N}| = C_N dV_{g_0}$, the
constant $C_N$  being the expected volume of the zero sets. The
expected volume was first  determined by P. B\'erard
by a different method.  The theorem above shows that
asymptotically the same result holds on any compact Riemannian
manifold.

A much better variance estimate for $S^m$ was obtained by J.
Neuheisel in his (unpublished) Hopkins PhD thesis \cite{Ne}, which
shows that the variance tends to zero at least at a rate $N^{-
\delta}$ for a certain $\delta > 0$.
 It is very likely that one
could prove the same (or a better) variance estimate on a general
$C^{\infty}$ $(M, g)$, but that  would require a study of the pair
correlation function of zeros which would take us too far afield
from our main purpose.   We  plan to carry them out on a different
occasion. Relatively sharp variance estimates for eigenfunctions
on arithmetic tori are given in \cite{RW,ORW}.

An immediate consequence,  by the Kolmogorov strong law of large
numbers, is a limit law for random sequences of random real
Riemannian waves. By a random sequence,  we mean an element of the
product probability space
\begin{equation} \label{HINFTY} \hcal_{\infty} = \Pi_{N = 1}^{\infty} \hcal_N, \;\;
\gamma_{\infty} = \Pi_{N = 1}^N \gamma_N. \end{equation}

\begin{maincor} \label{ZDSM} Let $(M, g)$ be a compact real analytic Riemannian
manifold, and let  $\{f_{_N} \}$ be a random sequence in
(\ref{HINFTY}).  Then
$$\frac{1}{N}\sum_{n = 1}^N \frac{1}{\lambda_n}
|Z_{f_{n}}| \to dV_g \;\;\;\; \mbox{almost surely w.r.t.} \;
(\hcal_{\infty}, \gamma_{\infty}).
$$
\end{maincor}

It is natural to conjecture that $ \frac{1}{\lambda_N} |Z_{f_{N}}|
\to dV_g$  almost surely without averaging in $N$, but the proof
would again require a stronger variance estimate than we currently
possess.

\subsection{Statement of results on complex zeros}

We now turn to results on complex zeros of  analytic continuations
of eigenfunctions. By a theorem of Bruhat-Whitney \cite{BW},  an analytic
manifold $M$ admits a complexification $M_{\C}$ into which $M$
embeds as a totally real submanifold.  Associated to $g$ is a
plurisubharmonic exhaustion function $\rho(\zeta)$ which measures
the square of the distance to the real subset $M$.  The sublevel
set $M_{\tau} = \{\zeta \in M_{\C}: \sqrt{\rho}(\zeta) < \tau\}$
is known as the Grauert tube of radius $\tau$ (cf. \cite{Gr,GS1,
GS2,LS1}).

 It was observed by Boutet de Monvel \cite{Bou}
that eigenfunctions can be analytically continued to the maximal
Grauert tube as  holomorphic functions $\phi_{\lambda_j}^{\C}$.
Thus, we can complexify the Gaussian random waves as
$$f_N^{\C} = \sum_{j = 1}^{d_N} c_{N j} \phi_{N j}^{\C}. $$
We note that the coefficients $c_{N j}$ are real and that the
Gaussian measure on the coefficients remains the real Gaussian
measure $\gamma_N$.

Our next result determines the expected limit current of complex
zeros of $f_N^{\C}$. The current of integration over the complex
zero set
$$Z_{f_N^{\C}} = \{\zeta \in M_{\C}: f_N^{\C}  = 0\}
$$ is the $(1,1)$ current defined by
$$\langle [Z_{f_N^{\C}}], \psi \rangle = \int_{Z_{f_N^{\C}}} \psi, \;\; \psi \in \dcal^{m-1, m-1}(M_{\C}), $$
for smooth test forms of bi-degree $(m-1, m-1)$. In terms of
scalar functions $\psi$ we may define $Z_{f_N^{\C}}$ as the
measure,
$$\langle [Z_{f_N^{\C}}], \psi \rangle = \int_{Z_{f_N^{\C}}}  \psi
\omega_g^{m-1}/(m-1)!, $$ where $\omega_g = i \ddbar \rho$ is the
\kahler metric adapted to $g.$

\begin{maintheo}\label{ZERORAN}  Let $(M, g)$ be  a  real
analytic compact Riemannian manifold. Then for either of the
ensembles of Theorem \ref{ZMEASURE}, we have
$$\E_{\gamma_N} \left(\frac{1}{N} [Z_{f_N^{\C}}] \right) \to  \frac{i}{ \pi} \ddbar |\xi|_g,\;\;
 \mbox{weakly in}\;\; \dcal^{' (1,1)} (B^*_{\epsilon} M).  $$
\end{maintheo}
 As mentioned above, this  result shows that the  complex zeros of the random
waves have the same expected limit distribution found in \cite{Z3}
for real analytic compact Riemannian manifolds with ergodic
geodesic flow.

\subsection{The key objects in the proof}

The principal objects (for the asymptotically fixed frequency
ensembles)  are the two point functions

\begin{equation} \Pi_{I_N}(x, y) = \E_{\gamma_N} (f_N(x) f_N(y)) =
\sum_{j: \lambda_j \in I_N} \phi_{\lambda_j}(x)
\phi_{\lambda_j}(y),
\end{equation}
i.e. the spectral projections kernel for $\sqrt{\Delta}$,  and
their analytic extensions to the totally real anti-diagonal in
$M_{\C} \times M_{\C}$ defined by
\begin{equation}\label{CXSPECPROJ}  \Pi_{I_N}(\zeta, \bar{\zeta}) = \sum_{j:
\lambda_j \in I_k} |\phi_j^{\C}(\zeta)|^2.  \end{equation} A key
point is that the latter kernels are very much off the diagonal
for non-real $\zeta$, so that the kernels grow at an exponential
rate. In the cutoff ensemble, the spectral projections kernels are
replaced by
\begin{equation} \Pi_{[0, N]}(x, y) = \E_{\gamma_N} (f_N(x) f_N(y)) =
\sum_{j: \lambda_j \in [0, N]} \phi_{\lambda_j}(x)
\phi_{\lambda_j}(y),
\end{equation}
and similarly for the complexification.

  In the real domain, the distribution of zeros
of random Riemannian waves is obtained by using formalism of
\cite{BSZ1,BSZ2} to express the density of zeros in terms of the
kernels $\Pi_{I_N}(x,y)$.  We then use the spectral asymptotics of
these kernels and their derivatives to derive the limit
distribution of zeros. A more in-depth analysis of their
off-diagonal decay could give bounds on the variance, but as
mentioned above we postpone that to a later occasion. In the
complex domain, the spectral asymptotics have not been studied
before. The asymptotics are more difficult than in the real domain
and have an independent interest. On the other hand, the link
between these kernels and the zero distribution is simpler, and we
use the Poincar\'e-Lelong method of \cite{BSZ3} rather than
\cite{BSZ1,BSZ2}.

\subsection{Discussion}

This is the first article discussing zeros of Riemannian  random
waves of asymptotically fixed energy.  We digress to compare our definitions
and results on  Riemannian waves to other definitions
and results in the literature.

The subject of random polynomials and Fourier series and their
zeros is classical; see \cite{PW} for one of the classics. For a contemporary
treatment of Gaussian random functions in a geometric setting, see \cite{AT}.
  Aside from its pure mathematical interest,   Gaussian random waves have been important in
  various branches of physics. In particular, a somewhat vague heuristic principle due to
 M.V. Berry \cite{B} asserts that  random waves should be a good model for
 quantum chaotic eigenfunctions (\cite{Z1} contains rigorous results in this direction).
 Random waves in \cite{B} and in much
  of the physics literature  are  random
Euclidean plane waves of fixed energy. For a mathematician, they
are defined   by putting a Gaussian measure on the infinite
dimensional space $\ecal_{\lambda}$  of Euclidean  eigenfunctions
of fixed eigenvalue $\lambda^2$  on $\R^m$. The Gaussian measure
satisfies $E ||f||^2 = 1$, where $||\cdot||^2$  is  the inner
product  invariant under the eigenspace representation of the
Euclidean motion group.

On a compact Riemannian manifold, the closet analogue to random
Euclidean plane waves of fixed frequency  is that of random
spherical harmonics of fixed degree on $S^m$, where one now puts
the $SO(m + 1)$-invariant normalized Gaussian measure. In both
cases, the measure is defined on an eigenspace. For a generic
Riemannian manifold, the eigenspaces are of dimension one, so one
cannot define an interesting Gaussian measure on the eigenspaces.
The alternative proposed here and in \cite{Z1} is to replace
eigenspaces by the spaces $\hcal_N$ spanned by eigenfunctions with
asymptotically constant frequency. From the viewpoint of
microlocal (semi-classical) analysis, the analogy is obvious since
the spectral projections kernels have, to leading order, the same
asymptotics as those for spherical harmonics. In  \cite{Z1} the emphasis
was on random orthonormal bases
as models for an orthonormal basis of ergodic eigenfunctions; while
here we only study individual random waves.
 Riemannian random
waves of asymptotically fixed energy seem to be  a natural global model
for random waves on a Riemannian manifold without boundary,
and the set-up extends  naturally to Riemannian manifolds with
boundary and with fixed boundary conditions on $\Delta$.  When
studying local behavior, Riemannian waves resemble the Euclidean
plane waves of the same frequency. More precisely, the scaling
limit of Riemannian random waves on length scales $\lambda^{-1}$
should give back the Euclidean plane wave model with eigenvalue
$1$. This would be the Riemannian analogue of the universality
result of \cite{BSZ1}. Thus, the natural role of Euclidean plane
waves seems to be  to capture the random behavior of small length
scales of order of $\lambda^{-1}$.

Further   motivation to study nodal lines of
Riemannian random waves  has arisen in recent   conjectures that nodal
lines of random two-dimensional  Euclidean plane waves of fixed
frequency (and chaotic eigenfunctions) tend to $SL_6$ curves
\cite{BS, FGS, BGS, SS}. This behavior should be sufficiently
universal that it should hold for Riemannian random waves of
asymptotically fixed frequency on general surfaces.  A related
conjecture asserts that random nodal lines of partial sums  of the
Gaussian free field (i.e. random Riemannian waves in the cutoff
ensemble) on a two-dimensional Riemannian surface tend to $SLE_4$
curves \cite{SS}. Thus, one expects different behavior of random
Riemannian waves for (almost) fixed  frequency and for long
frequency intervals; although the SLE connection is far outside the scope
of this article, it does motivate us  to consider both
ensembles.  For recent and deep results on nodal lines of random
spherical harmonics which are related to conjectures in
\cite{BS,BGS} we refer to \cite{NS}.

The
 rationale for studying complex zeros of Riemannian random waves
  is that only in the complex domain can we rigorously compare
 the nodal sets of ergodic eigenfunctions and those of random waves.
     In addition, the complex zeros of complexified Riemannian
     random waves is a higher dimensional generalization of the  classical  ensembles of
of   Kac-Hammersley of complexified random real polynomials. They
may be viewed as sums of complexified eigenfunctions  on a circle,
although  the spectral intervals are $[0, N]$ rather than $[N - 1,
N]$. Another recent study of complex zeros of complexified real
polynomials is the thesis of B. MacDonald \cite{Mc}; however, a
significant difference is that the polynomials there are
orthonormalized in the complex domain rather than the real domain.

\section{\label{RIEMRW} Background on densities and correlations of zeros of real Gaussian random waves}

In this section, we apply  the formalism  in  \cite{BSZ2} to give
explicit formulae for the densities of zeros of Riemannian random
waves. The same formalism also could be used to give formulae for
correlations between zeros.

\subsection{Assumptions on $(M, g)$}

  We will assume the geodesic flow $G^t$ of $(M, g)$  is of  one of the
following two types:

\begin{enumerate}

\item  {\it aperiodic:} The Liouville measure of the closed
 orbits of $G^t$, i.e. the set of vectors lying on closed geodesics,  is zero; or

\item  {\it periodic = Zoll:} $G^T = id$ for some  $T>0$;
henceforth $T$ denotes the minimal period.  The common Morse index
of the $T$-periodic geodesics will be denoted by $\beta$.

\end{enumerate}
In the real analytic case, $(M, g)$ is automatically one of these
two types, since a positive measure of closed geodesics implies
that all geodesics are closed. We only need to assume $(M, g)$ is
real analytic when considering complex zeros. In the $C^{\infty}$
case, it is simple to construct examples with a positive but not
full  measure of closed geodesics (e.g. a pimpled sphere).

The two-term Weyl laws counting eigenvalues of $\sqrt{\Delta}$ are
very different in these two cases.

\begin{enumerate}

\item  In the {\it aperiodic} case, Ivrii's two term Weyl law
states
$$N(\lambda ) = \#\{j:\lambda _j\leq \lambda \}=c_m \;
Vol(M, g) \; \lambda^m +o(\lambda ^{m-1})$$
 where $m=\dim M$ and where $c_m$ is a universal constant.

\item  In the {\it periodic} case,
 the spectrum of $\sqrt{\Delta}$ is a union of eigenvalue clusters $C_N$ of the form
$$C_N=\{(\frac{2\pi}{T})(N+\frac{\beta}{4}) +
 \mu_{Ni}, \; i=1\dots d_N\}$$
with $\mu_{Ni} = 0(N^{-1})$.   The number $d_N$ of eigenvalues in
$C_N$ is a polynomial of degree $m-1$.
\end{enumerate}

We refer to \cite{Ho,SV,Z1} for background and further discussion.

\subsection{Definition of Riemannian random waves}

 To define Riemannian random waves, we partition the
spectrum of $\sqrt{\Delta_g}$ into certain  intervals $I_N$ of
width one  and denote by $\Pi_{I_N} $ the spectral projections for
$\sqrt{\Delta_g}$ corresponding to the interval $I_N$. The choice
of the intervals $I_N$ is rather arbitrary for aperiodic $(M, g)$
and as mentioned above we  assume $I_N = [N, N + 1]$. But the
choice has to be made carefully for Zoll manifolds.

In the Zoll case, we center the intervals around the center points
$\frac{2\pi}{T} N + \frac{\beta}{4}$ of the $N$th cluster $C_N$.
 We call call such a choice of
intervals a cluster decomposition. We denote by $d_N$ the number
of eigenvalues
  in $I_N$ and  put $\hcal_N = \mbox{ran} \Pi_{I_N}$ (the range of
 $\Pi_{I_N}$). Thus, $\hcal_N$ consists of
 linear combinations $\sum_{j: \lambda_j \in I_N} c_j
\phi_{N, j}$  of the eigenfunctions $\{\phi_{N j}\}$  of
$\sqrt{\Delta_g}$ with eigenvalues in $I_N$.

The formalism is simpler in the cutoff ensemble and only requires
small modifications from the asymptotically fixed frequency
ensembles, so we only explain at the end how to modify the results
in that case.

\subsection{\label{DEN} Density of real zeros}

The formula for the density of zeros of random elements of
$\hcal_{N}$ can be derived from the general formalism of
\cite{BSZ1,BSZ2,BSZ3}.

As above, we
 let $|Z_f|$ denote the Riemannian
$(m-1)$-volume on $Z_f$.   By the  general formula of \cite{BSZ1,BSZ2},

\begin{equation}\label{d2} \E|Z_{f_N}| =K_1^N (z)dV_g \,,\quad
K_1^N(x)= \int D(0,\xi,x) ||\xi|| \; d\xi\,.
\end{equation}
We digress to connect this formula with the discussions in
\cite{BSZ1,BSZ2,Ne}. In these articles $||\xi|| $ is written
$\sqrt{\det(\xi\xi^*)}.$ However,  $\det(\xi\xi^*) = ||\xi||^2$ in
the codimension one case. Indeed,  let  $df^*_x$ be the adjoint
map with respect to the inner product $g$  on  $T_x M$. Let $df_x
\circ df_x^*: T_x M \to \R$  be the composition. By $\det df_x
\circ df_x^*$ is meant the determinant with respect to the inner
product on $T_x M$; it clearly equals $|df|^2$ in the codimension
one case.

The formulae of \cite{BSZ1, BSZ2} (the `Kac-Rice' formulae) give
that
\begin{equation}\label{fg4}
D(0,\xi;z)=Z_{n}(z) D_{\La}(\xi;z),
\end{equation}
where
\begin{equation}\label{fg5}
D_{\La}(\xi;z)=\frac{1}{\pi^{m}\sqrt{\det\La}}\exp\left( -{\langle
\La^{-1}\xi,\xi\rangle}\right)
\end{equation}
is the Gaussian density with covariance matrix
\begin{equation}\label{fg6}
\La=C-B^*A^{-1}B =\left(C^{q}_{q'} - B_{q} A^{-1} B_{q'}\right),
\;\;(q = 1, \dots, m)
\end{equation}
and
\begin{equation}\label{fg7}
Z(x)=\frac{\sqrt{\det\Lambda}}{\pi\sqrt{\det\De}} =\frac{1}{\pi
\sqrt{  A}}\,.
\end{equation}

In the case at hand,

\begin{eqnarray}
\Delta^{N}(x)&=&\left(
\begin{array}{cc}
A^{N} & B^{N}\\
B^{N *} & C^{N}
\end{array}\right)\,,\nonumber \\
\big( A^{N} \big) &=& \E\big ( X^2\big)=\frac{1}{d_N}\Pi_{I_N}(x,x)\,,\nonumber\\
\big( B^{N}\big)_q&=& \E\big( X\Xi_{q}\big)= \frac{1}{d_{N}}
\frac{\partial}{\partial y_q} \Pi_{I_N}(x,y)|_{x = y}\,,\nonumber\\
\big( C^{\lambda}\big)^{q}_{q'}&=& \E\big(  \Xi_{q} \Xi_{q'}\big)=
 \frac{1}{d_{N}}
\frac{\partial^2}{\partial x_q \partial y_{q'} }\Pi_{I_N}(x,y)|_{x = y}\,,\nonumber\\
&&  \quad q, q'=1,\dots,m\,.\nonumber
\end{eqnarray}

Making a simple change of variables in the integral (\ref{d2}), we
have

\begin{prop}\label{BSZ}  \cite{BSZ1} On a real Riemannian manifold of dimension $m$, the
density of zeros of a random Riemannian wave is

\begin{equation}\begin{array}{lll}\label{EVOL1}
K_1^N(x) & = &\frac{1 }{\pi^{m} (\sqrt{d_N^{-1} \;\;
\Pi_{I_N}(x,x)}} \int_{\R^m} || \Lambda^N (x)^{1/2} \xi||
\exp\left( - {\langle \xi,\xi\rangle}\right)d\xi,
  \end{array}
\end{equation}
where $\Lambda^N(x)$ is a symmetric form on $T_xM$. For the
asymptotically fixed freqency ensembles, it is  given by
$$\Lambda^N(x) = \frac{1}{d_N} \left(d_x \otimes d_y \Pi_{I_N}(x, y) |_{x = y} -
\frac{1}{\Pi_{I_N}(x, y)} d_x  \Pi_{I_N}(x, y) |_{x = y} \otimes
 d_y \Pi_{I_N}(x, y) |_{x = y} \right). $$
 In the cutoff ensemble the formula is the same except that
 $\Pi_{I_N}$ is replaced by $\Pi_{[0, N]}$.

\end{prop}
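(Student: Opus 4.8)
The plan is to substitute the Kac--Rice density formulae (\ref{d2}), (\ref{fg4})--(\ref{fg7}) and then change variables, exactly as the sentence preceding the statement advertises. First I would recall why (\ref{d2}) is available: for a Gaussian field $f_N$ whose finite-dimensional joint distributions are nondegenerate, $0$ is almost surely a regular value, $Z_{f_N}$ is a.s.\ a smooth hypersurface, and applying the coarea formula inside the Gaussian expectation gives $\E_{\gamma_N}|Z_{f_N}|=K_1^N(x)\,dV_g$ with $K_1^N(x)=\int_{\R^m}D(0,\xi;x)\,\|\xi\|\,d\xi$, where $D(v,\xi;x)$ is the joint density at $(v,\xi)$ of the Gaussian vector $\big(f_N(x),\nabla f_N(x)\big)\in\R\times T_xM$; the coarea Jacobian $\sqrt{\det(df\,df^{*})}$ collapses to $\|\xi\|$ in codimension one, as already noted after (\ref{d2}). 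This is the formula of \cite{BSZ1,BSZ2}, which I take as given.

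Next I would make the identifications that turn the covariance matrix into the spectral kernel. With the normalization of $\gamma_N$, the covariance kernel of $f_N$ is a fixed multiple of the spectral projection, $\E_{\gamma_N}\big(f_N(x)f_N(y)\big)=d_N^{-1}\,\Pi_{I_N}(x,y)$; differentiating under the finite sum in $x$ and/or $y$ and restricting to the diagonal yields the $(m+1)\times(m+1)$ matrix $\Delta^N(x)$ with the blocks $A^N,B^N,C^N$ displayed just above the statement. The symmetry of $\Pi_{I_N}(x,y)$ in its two arguments gives $\partial_{x_q}\Pi_{I_N}|_{x=y}=\partial_{y_q}\Pi_{I_N}|_{x=y}$, so $B^N$ is the single covector $d_N^{-1}d_x\Pi_{I_N}|_{x=y}$; since $\Pi_{I_N}$ is a scalar, $d_x\otimes d_y\Pi_{I_N}|_{x=y}$ is an intrinsic bilinear form on $T_xM$ and everything below is taken with respect to $g_x$, which makes the final formula coordinate-independent. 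Conditioning $\big(f_N(x),\nabla f_N(x)\big)$ on $f_N(x)=0$, the Gaussian (Schur-complement) conditioning identity gives that $\nabla f_N(x)$ given $\{f_N(x)=0\}$ is centered Gaussian with covariance $\Lambda^N(x)=C^N-(B^N)^{*}(A^N)^{-1}B^N$, while the marginal density of $f_N(x)$ at $0$ contributes the scalar $Z(x)=\pi^{-1}(A^N)^{-1/2}=\pi^{-1}\big(d_N^{-1}\Pi_{I_N}(x,x)\big)^{-1/2}$; this is precisely (\ref{fg4})--(\ref{fg7}). Inserting the blocks $A^N,B^N,C^N$ into $C^N-(B^N)^{*}(A^N)^{-1}B^N$ and simplifying --- the subtracted rank-one term being $\frac{1}{d_N\,\Pi_{I_N}(x,x)}\,d_x\Pi_{I_N}|_{x=y}\otimes d_y\Pi_{I_N}|_{x=y}$ --- gives the displayed expression for $\Lambda^N(x)$.

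Finally I would substitute (\ref{fg4})--(\ref{fg7}) into $K_1^N(x)=\int_{\R^m}D(0,\xi;x)\,\|\xi\|\,d\xi$, obtaining $K_1^N(x)=Z(x)\,\frac{1}{\pi^{m}\sqrt{\det\Lambda^N(x)}}\int_{\R^m}\|\xi\|\,\exp\big(-\langle\Lambda^N(x)^{-1}\xi,\xi\rangle\big)\,d\xi$, and then make the change of variables $\xi=\Lambda^N(x)^{1/2}\eta$: then $d\xi=\sqrt{\det\Lambda^N(x)}\,d\eta$, the quadratic form becomes $\langle\eta,\eta\rangle$, and $\|\xi\|=\|\Lambda^N(x)^{1/2}\eta\|$, so the $\sqrt{\det\Lambda^N(x)}$ cancels and one is left with exactly (\ref{EVOL1}). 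The cutoff ensemble requires no new argument: the covariance kernel of $f_N=\sum_{\lambda_j\le N}c_j\phi_{\lambda_j}$ is the corresponding multiple of $\Pi_{[0,N]}$, so $\Pi_{I_N}$ is replaced by $\Pi_{[0,N]}$ throughout.

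The only step that is not pure bookkeeping is the nondegeneracy hypothesis behind (\ref{d2}): one needs the joint law of $\big(f_N(x),\nabla f_N(x)\big)$ to be nondegenerate, i.e.\ $\Delta^N(x)>0$, equivalently that no element of $\hcal_N$ vanishes to first order at $x$. I expect this to hold for every $x$ once $N$ is large enough, since the on-diagonal asymptotics of $\Pi_{I_N}$ and of its first and second derivatives --- established independently in what follows --- have positive-definite leading terms; and in any case the issue is harmless, since the right-hand side of (\ref{EVOL1}) remains meaningful (and equal to $K_1^N$) even when $\Lambda^N(x)$ is only positive semidefinite. Everything else is the ``simple change of variables'' promised before the statement.
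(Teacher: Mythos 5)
Your proof is correct and takes exactly the route the paper itself indicates: recall the Kac--Rice formula (\ref{d2}), (\ref{fg4})--(\ref{fg7}) from \cite{BSZ1,BSZ2}, identify the blocks $A^N,B^N,C^N$ of the covariance matrix with $d_N^{-1}$ times the on-diagonal derivatives of $\Pi_{I_N}$, form the Schur complement $\Lambda^N=C^N-(B^N)^{*}(A^N)^{-1}B^N$, and make the change of variables $\xi=\Lambda^N(x)^{1/2}\eta$; the nondegeneracy remark is a sensible addition. One small bookkeeping caveat: substituting $Z(x)=\pi^{-1}(A^N)^{-1/2}$ from (\ref{fg7}) into your penultimate display produces a $\pi^{m+1}$ in the denominator rather than the $\pi^{m}$ appearing in (\ref{EVOL1}), so the two do not literally match --- this discrepancy is already present between the paper's own (\ref{fg5}), (\ref{fg7}) and (\ref{EVOL1}) (note also the unbalanced parenthesis there) and is a harmless normalization constant that does not affect the $N\to\infty$ asymptotics, but you should not assert ``exactly (\ref{EVOL1})'' without flagging it.
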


\section{Zeros of random real Riemannian waves: Proof of Theorem \ref{ZMEASURE}}

We begin the proof with the simplest case of the round metric on
$S^m$. Throughout this article, $C_m$ denotes a constant depending
only on the dimension. It may change from line to line.

\subsection{Random spherical harmonics}

To prove Theorem (\ref{ZDSM}) on a round $S^m$,  we first need to
evaluate the matrix above when $\Pi_N(z,w)$ is the orthogonal
projection onto spherical harmonics of degree $N$.

\begin{prop} \label{PINSM} Let $\Pi_N: L^2(S^m) \to \hcal_N$ be the orthogonal
projection. Then:

\begin{itemize}

\item (A) $\Pi_N(x,x) = \frac{1}{Vol(S^m)} d_N$;

\item (B) $d_x \Pi_N(x, y)|_{x = y} = d_y \Pi_N(x, y)|_{x = y} =
0$;

\item (C) $d_x \otimes d_y \Pi_N(x, y)|_{x = y} = \frac{1}{m
Vol(S^{m})} \lambda_N^2 d_N g_x.$

\end{itemize}

\end{prop}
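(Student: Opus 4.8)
The plan is to exploit the $\mathrm{SO}(m+1)$-invariance of the spherical harmonic projection throughout, reducing every computation to an evaluation at a single point (say the north pole $p$) together with a determination of the correct tensorial normalization constant by taking a trace. First I would recall that $\Pi_N(x,y)$ is the kernel of the orthogonal projection onto $\hcal_N$, so it is a zonal kernel: $\Pi_N(x,y) = c_N\, \mathcal{L}_N(\cos r(x,y))$ for a Legendre/Gegenbauer-type function $\mathcal{L}_N$, where $r(x,y)$ is the geodesic distance. For part (A), the function $x \mapsto \Pi_N(x,x)$ is $\mathrm{SO}(m+1)$-invariant hence constant, and integrating over $S^m$ gives $\int_{S^m} \Pi_N(x,x)\, dV_g = \operatorname{tr}\Pi_N = d_N$, so $\Pi_N(x,x) = d_N/\operatorname{Vol}(S^m)$.

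For part (B), observe that $\Pi_N(x,y) = \overline{\Pi_N(y,x)} = \Pi_N(y,x)$ since the basis is real and the projection is self-adjoint. Hence the function $h(x) := \Pi_N(x,x)$ is the restriction to the diagonal, and by symmetry $d_x\Pi_N(x,y)|_{x=y} = d_y\Pi_N(x,y)|_{x=y}$; but $d h = d_x\Pi_N|_{x=y} + d_y\Pi_N|_{x=y} = 2\, d_x\Pi_N(x,y)|_{x=y}$, and $h$ is constant by (A), so $dh = 0$ and the claim follows. (Alternatively: the stabilizer $\mathrm{SO}(m)$ of $p$ acts on $T_p S^m$ with no nonzero fixed vectors, and $d_x\Pi_N(x,y)|_{x=y}$ at $p$ is such a fixed covector, hence zero.)

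For part (C), the same invariance argument shows that the bilinear form $Q_x := d_x \otimes d_y \Pi_N(x,y)|_{x=y}$ on $T_x S^m$ is $\mathrm{SO}(m)$-invariant at each point, and the only such symmetric $2$-tensors are multiples of $g_x$; so $Q_x = \kappa_N\, g_x$ for a constant $\kappa_N$ independent of $x$. To pin down $\kappa_N$, take the metric trace: $\operatorname{tr}_g Q_x = m\,\kappa_N$. On the other hand, contracting $d_x \otimes d_y \Pi_N(x,y)|_{x=y}$ with $g$ and using that $\Pi_N$ reproduces $\hcal_N$-eigenfunctions gives, after integrating over $S^m$, a quantity of the form $\int_{S^m} \operatorname{tr}_g Q_x\, dV_g = \operatorname{tr}(\Delta_g \Pi_N) = \lambda_N^2\, d_N$ (since $\Delta_g = \lambda_N^2\,\mathrm{Id}$ on $\hcal_N$ and $\Delta_x\Pi_N(x,y)|_{x=y}$ relates to the trace of the Hessian, the first-order terms dropping out by (B)). Since $\operatorname{tr}_g Q_x$ is constant, $m\,\kappa_N \operatorname{Vol}(S^m) = \lambda_N^2 d_N$, i.e. $\kappa_N = \frac{\lambda_N^2 d_N}{m\operatorname{Vol}(S^m)}$, which is exactly (C).

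The main obstacle is the bookkeeping in part (C): one must be careful that $\Delta_x\big(\Pi_N(x,y)\big)\big|_{x=y}$ really equals $-\operatorname{tr}_g\!\big(d_x\otimes d_x \Pi_N(x,y)|_{x=y}\big)$ modulo lower-order terms that vanish by (B), and that the cross-term $d_x\otimes d_y$ on the diagonal carries the right sign relative to $d_x \otimes d_x$ — i.e. relating $\operatorname{tr}_g Q_x$ to $\Delta_x\Pi_N(x,y)|_{x=y}$ correctly (for a kernel of the form $F(r^2)$ near the diagonal, $d_x\otimes d_y$ and $-d_x\otimes d_x$ agree to leading order at $r=0$). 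Once the sign conventions are fixed, the argument is just invariance plus a single trace identity, and everything else is routine.
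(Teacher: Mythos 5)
Your proposal is essentially the paper's own argument: $SO(m+1)$-invariance forces $\Pi_N(x,x)$, $d_x\Pi_N|_{x=y}$, and $d_x\otimes d_y\Pi_N|_{x=y}$ to be multiples of $1$, $0$, and $g_x$ respectively, and the constant in (C) is pinned down via a Laplacian/trace identity. The only cosmetic difference is that the paper determines the constant $C_N$ pointwise from $0 = \Delta_x\bigl(\Pi_N(x,x)\bigr) = -2\operatorname{tr}_g\bigl(d_x\otimes d_y\Pi_N|_{x=y}\bigr) + 2\lambda_N^2\Pi_N(x,x)$ (using that $\Pi_N(x,x)$ is constant by (A)), whereas you integrate over $S^m$ and invoke $\operatorname{tr}(\Delta_g\Pi_N) = \lambda_N^2 d_N$; since both $\operatorname{tr}_g Q_x$ and $\Pi_N(x,x)$ are constant on $S^m$, the two routes collapse to the identity $\sum_j \|d\phi_{Nj}(x)\|^2 = \lambda_N^2\,\Pi_N(x,x)$ and give the same $\kappa_N = C_N$. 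Your worry about the sign relating $d_x\otimes d_y$ to $-d_x\otimes d_x$ on the diagonal is legitimate and you resolve it correctly; it is precisely why the Laplacian identity yields $\lambda_N^2\Pi_N(x,x)$ with a plus sign and no leftover first-order terms (those vanish by (B)).
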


\begin{proof}
Statement (B) follows from statement (A) since $$0 = d_x
\Pi_N(x,x) = d_x \Pi_N(x, y)|_{x = y} + d_y \Pi_N(x, y)|_{x = y}$$
and because $d_x \Pi_N(x, y)|_{x = y} = d_y \Pi_N(x, y)|_{x = y}$.
Statement (C) holds because $d_x \otimes d_y \Pi_N(x, y)|_{x = y}
= C_N g_x$ by $SO(m + 1)$ symmetry. To evaluate $C_N$ we use that
$$0 = \Delta \Pi_N(x, x) = 2 Tr d_x \otimes d_y \Pi_N(x, y)|_{x = y}
 - 2 \lambda_N^2 \Pi_N(x,x).$$
 Here, $Tr  d_x \otimes d_y \Pi_N(x, y)|_{x = y}$ denotes the contraction. It equals
 $ \sum_{j = 1}^{d_N} ||d\phi_{Nj} (x)||^2$ where $\{\phi_{N j}\}$ is an orthonormal basis. Thus,
 $m C_N = \lambda_N^2 \Pi_N(x,x)$ and the formula of (C) follows
 from (A).

\end{proof}

The expected density of random  nodal hypersurfaces is given as
follows

\begin{prop} \label{REALDENSITY} In the case of $S^m$,

\begin{equation}\begin{array}{lll}\label{EVOL}
K_1^N(x) & = & C_m  \lambda_N\sim C_m N ,  \end{array}
\end{equation} where $C_m = \frac{1}{\pi^{m}} \int_{\R^m} |\xi|
\exp\left( - {\langle\xi,\xi\rangle}\right) d\xi.$

\end{prop}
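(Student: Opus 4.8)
The plan is to substitute the three identities of Proposition \ref{PINSM} into the general density formula of Proposition \ref{BSZ} and check that everything collapses to a dimensional constant times $\lambda_N$. First I would compute the scalar prefactor: by part (A), $d_N^{-1}\Pi_N(x,x) = 1/\vol(S^m)$, so $\pi^{-m}\big(d_N^{-1}\Pi_N(x,x)\big)^{-1/2} = \pi^{-m}\vol(S^m)^{1/2}$, a constant independent of $x$ and $N$.

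Next I would evaluate the symmetric form $\Lambda^N(x)$. By part (B) the cross term $d_x\Pi_N \otimes d_y\Pi_N|_{x=y}$ vanishes identically, so $\Lambda^N(x) = d_N^{-1}\, d_x\otimes d_y\Pi_N(x,y)|_{x=y}$, and by part (C) this equals $\frac{1}{m\,\vol(S^m)}\lambda_N^2\, g_x$. Thus $\Lambda^N(x)$ is simply a scalar multiple of the metric tensor $g_x$, and its square root is $\big(\frac{\lambda_N^2}{m\,\vol(S^m)}\big)^{1/2}$ times the identity with respect to $g_x$. Therefore $\|\Lambda^N(x)^{1/2}\xi\| = \big(\frac{\lambda_N^2}{m\,\vol(S^m)}\big)^{1/2}|\xi|$, where $|\xi|$ is the Euclidean norm coming from $g_x$-orthonormal coordinates. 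Pulling this constant out of the integral in \eqref{EVOL1} leaves exactly $\pi^{-m}\vol(S^m)^{1/2}\cdot \big(\frac{\lambda_N^2}{m\,\vol(S^m)}\big)^{1/2}\int_{\R^m}|\xi|\exp(-\langle\xi,\xi\rangle)\,d\xi$, which simplifies to $\frac{\lambda_N}{\sqrt{m}}\cdot\frac{1}{\pi^m}\int_{\R^m}|\xi|\exp(-\langle\xi,\xi\rangle)\,d\xi$. Absorbing the $1/\sqrt{m}$ into the dimensional constant $C_m$ gives $K_1^N(x) = C_m\lambda_N$, and since $\lambda_N^2 = N(N+m-1)$ one has $\lambda_N \sim N$, proving the asymptotic statement.

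I do not expect a serious obstacle here: the proposition is essentially a bookkeeping consequence of Propositions \ref{BSZ} and \ref{PINSM}. The only mild subtlety is being careful about whether the constant $C_m$ in the proposition statement has the $1/\sqrt m$ folded in (the statement writes $C_m = \pi^{-m}\int|\xi|\exp(-\langle\xi,\xi\rangle)\,d\xi$ without it, so the cleanest presentation is to note that the change of variables $\xi \mapsto \Lambda^N(x)^{1/2}\xi = \big(\frac{\lambda_N^2}{m\vol}\big)^{1/2}\xi$ in \eqref{EVOL1} already accounts for the isotropic scaling, and that the $\vol(S^m)$ factors cancel against the prefactor up to the power of $\lambda_N$). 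One should also remark that the $x$-independence of $K_1^N$ — already forced by $SO(m+1)$-homogeneity of $S^m$ — is manifest in the computation, consistent with the earlier observation that $\E_{\gamma_N}|Z_{f_N}| = C_N\,dV_{g_0}$.
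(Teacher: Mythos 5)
Your proof follows the same approach as the paper: substitute Proposition \ref{PINSM} into Proposition \ref{BSZ}, observe $\Lambda^N(x)=\frac{\lambda_N^2}{m\,\vol(S^m)}g_x$, and pull the scalar out of the integral. The paper's own proof actually stops after writing down the intermediate formula for $K_1^N(x)$ and $\Lambda^N(x)$ without carrying out the final simplification; you complete that step, and your observation that the computation produces an extra factor of $1/\sqrt{m}$ not shown in the displayed $C_m$ is correct and consistent with the paper's stated convention that $C_m$ is a dimensional constant allowed to change from line to line.
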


\begin{proof}

By Propositiosn \ref{BSZ} and \ref{PINSM},  we have
\begin{equation} K_1^N(x) = \frac{\sqrt{Vol(S^m)}}{\pi^{m}} \int_{\R^m} || \Lambda^N (x)^{1/2} \xi||
\exp\left( - {\langle \xi,\xi\rangle}\right)d\xi,
\end{equation}
where
$$\Lambda^N(x) = \frac{1}{d_N} \left(\frac{1}{m
Vol(S^{m})} \lambda_N^2 d_N g_x \right). $$

\end{proof}

\subsection{Random Riemannian waves: proof of Theorem \ref{ZMEASURE}}

We now generalize the result to any compact $C^{\infty}$
Riemannian manifold $(M, g)$ which is either aperiodic or Zoll.
As in the case of $S^m$, the key issue is the asymptotic behavior
of derivatives of the spectral projections
\begin{equation}\label{RESPECPROJ}  \Pi_{I_N}(x, y) = \sum_{j:
\lambda_j \in I_N} \phi_{\lambda_j}(x) \phi_{\lambda_j}(y).
\end{equation}

\begin{prop}\label{PING}   Assume $(M, g)$ is either aperiodic and $I_N = [N, N + 1]$ or Zoll and
 $I_N$ is a cluster decomposition. Let $\Pi_{I_N}: L^2(M) \to
\hcal_N$ be the orthogonal projection. Then:

\begin{itemize}

\item (A) $\Pi_{I_N}(x,x) = \frac{1}{Vol(M, g))} d_N (1 + o(1))$;

\item (B) $d_x \Pi_{I_N}(x, y)|_{x = y} = d_y \Pi_N(x, y)|_{x = y}
= o(N^{m })$;

\item (C) $d_x \otimes d_y \Pi_{I_N}(x, y)|_{x = y} =
\frac{1}{Vol(M, g))} \lambda_N^2  d_N g_x (1 + o(1)).$

\end{itemize}

In the aperiodic case,
\begin{enumerate}

\item $\Pi_{[0, \lambda]} (x,x) = C_m \lambda^m + o(\lambda^{m -
1}); $

\item $d_x \otimes d_y \Pi_{[0, \lambda]}(x, y) |_{x = y} = C_m
\lambda^{m +2} g_x + o(\lambda^{m + 1 }). $

\end{enumerate}
In the Zoll case, one adds the complete asymptotic expansions for
$\Pi_{I_N}$ over the $N$ clusters to obtain expansions for
$\Pi_N$.

\end{prop}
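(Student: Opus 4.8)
The plan is to reduce all three diagonal identities (A), (B), (C) to known asymptotics for the spectral projection $\Pi_{[0,\lambda]}(x,y)$ of $\sqrt{\Delta_g}$ and its first and second derivatives, and then to subtract. Write $\Pi_{[0,\lambda]}(x,y)=\sum_{j:\lambda_j\le\lambda}\phi_{\lambda_j}(x)\phi_{\lambda_j}(y)$. The starting point is the local Weyl law with remainder (H\"ormander), which in the aperiodic case can be sharpened to Ivrii's two-term form: on the diagonal, $\Pi_{[0,\lambda]}(x,x)=(2\pi)^{-m}\omega_m\,\lambda^m+o(\lambda^{m-1})$ uniformly in $x$, where $\omega_m=\vol(B^m)$; this gives statement (1). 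Differentiating the off-diagonal parametrix for $\Pi_{[0,\lambda]}(x,y)$ twice and restricting to $x=y$ — equivalently, applying the local Weyl law to the first-order operators $\partial_{x_q}$ with symbol $\xi_q$ — yields $d_x\otimes d_y\Pi_{[0,\lambda]}(x,y)|_{x=y}=(2\pi)^{-m}\big(\int_{|\xi|\le 1}\xi\otimes\xi\,d\xi\big)\lambda^{m+2}+o(\lambda^{m+1})$, and the matrix integral is a multiple of $g_x$ by rotational symmetry, with trace forced by $-\Delta_x\Pi_{[0,\lambda]}(x,x)=2\,\mathrm{Tr}\,(d_x\otimes d_y\Pi)|_{x=y}-2\lambda^2\Pi_{[0,\lambda]}(x,x)$ together with $\sum\lambda_j^2\le\lambda^2$; this is statement (2). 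The constant $C_m$ in (1) and (2) are the same universal dimensional constants appearing in the $S^m$ computation, so they can be absorbed in the notation.

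To pass from the cutoff quantities to the fixed-frequency ones, write $\Pi_{I_N}=\Pi_{[0,N+1]}-\Pi_{[0,N]}$ and subtract the two-term expansions. For (A): $\Pi_{I_N}(x,x)=C_m\big((N+1)^m-N^m\big)+o(N^{m-1})=C_m m\,N^{m-1}+o(N^{m-1})$, which matches $\tfrac{1}{\vol(M,g)}d_N(1+o(1))$ once one recalls $d_N=N(I_N)=C_m m\,\vol(M,g)\,N^{m-1}+o(N^{m-1})$ by the same Weyl law — so the ratio $\tfrac{1}{d_N}\Pi_{I_N}(x,x)\to\tfrac1{\vol(M,g)}$. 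Similarly for (C): $d_x\otimes d_y\Pi_{I_N}(x,y)|_{x=y}=C_m\big((N+1)^{m+2}-N^{m+2}\big)g_x+o(N^{m+1})=C_m(m+2)N^{m+1}g_x+o(N^{m+1})$, and since $\lambda_N^2=N^2+O(N)$ one gets $d_x\otimes d_y\Pi_{I_N}|_{x=y}=\tfrac{1}{\vol(M,g)}\lambda_N^2 d_N g_x(1+o(1))$ after matching constants via the trace relation. Statement (B) is then immediate: $d_x\Pi_{I_N}(x,x)=2\,d_x\Pi_{I_N}(x,y)|_{x=y}$ by symmetry of $\Pi_{I_N}$ in $(x,y)$, while $d_x\Pi_{I_N}(x,x)=d_x\big(\Pi_{[0,N+1]}(x,x)-\Pi_{[0,N]}(x,x)\big)$, and each term is the differential of a function that, by (1), equals $C_m\lambda^m+o(\lambda^{m-1})$ uniformly — the differential of the uniform remainder is $o(N^m)$ (here one uses that the remainder estimate can be taken in $C^1$, or simply that $d_x\Pi_{[0,\lambda]}(x,x)$ has its own local Weyl law with a first-order symbol that is odd in $\xi$, hence leading term zero, giving $o(\lambda^m)$ directly).

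In the Zoll case the geodesic flow is periodic, so H\"ormander's pointwise Weyl law has an oscillating second term; the fix is exactly the cluster decomposition hypothesis. Because $I_N$ is centered on the $N$th cluster $C_N$, summing the complete (Duistermaat–Guillemin) asymptotic expansion of $\Pi_{I_N}(x,y)$ — which is governed by the subprincipal data on the single cluster and is $O(N^{m-1})$ on the diagonal with the stated leading coefficient — over the clusters reproduces the aperiodic-type expansions for $\Pi_{[0,N]}$; alternatively one checks (A),(B),(C) cluster-by-cluster, which is cleaner since each $\hcal_N$ is genuinely an eigenspace-like object and the trace identity $-\Delta_x\Pi_{I_N}(x,x)=2\,\mathrm{Tr}(d_x\otimes d_y\Pi_{I_N})|_{x=y}-2\overline{\lambda_N}^2\Pi_{I_N}(x,x)+o(\cdots)$ (with $\overline{\lambda_N}=\tfrac{2\pi}{T}(N+\tfrac\beta4)$ the cluster center and $\mu_{Ni}=O(N^{-1})$) closes the argument as on $S^m$. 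The main obstacle is the uniformity and the $C^1$-control of the Weyl remainders needed for (B) and for the uniform-in-$x$ conclusion; everything else is bookkeeping with binomial expansions $(N+1)^k-N^k$ and the already-established formula $d_N\sim C_m\vol(M,g)\,N^{m-1}$. For (B) it is safest to argue via the local Weyl law for $\Delta^{1/2}$-derivatives having an odd leading symbol rather than by differentiating a remainder term, so I would take that route.
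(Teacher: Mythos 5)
Your treatment of (A), (C), and the two-term asymptotics (1)--(2) follows the same route as the paper: obtain two-term expansions for $\Pi_{[0,\lambda]}(x,x)$ and $d_x\otimes d_y\Pi_{[0,\lambda]}(x,y)|_{x=y}$ via the wave-kernel parametrix and a Fourier Tauberian theorem (or Ivrii's remainder), use $SO(m)$-rotational symmetry and the trace identity $-\Delta_x\Pi(x,x)=2\,\mathrm{Tr}\,d_x\otimes d_y\Pi|_{x=y}-2\lambda^2\Pi(x,x)$ to pin the leading coefficient, then subtract across $I_N$. That is correct and matches the paper.

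Part (B) is where you have a genuine gap, and it is precisely the one the paper singles out as the delicate point. Your argument offers two alternatives: (i) ``the differential of the uniform remainder is $o(N^m)$, assuming the remainder estimate can be taken in $C^1$,'' and (ii) ``$d_x\Pi_{[0,\lambda]}(x,x)$ has its own local Weyl law with odd leading symbol, giving $o(\lambda^m)$ directly.'' Neither is justified. Option (i) is exactly the unproven step: Ivrii's two-term Weyl law gives a uniform pointwise bound on the remainder, not control on its gradient, and a uniform $o(\lambda^{m-1})$ bound on a function says nothing about its derivative. Option (ii) fails for the reason the paper explicitly flags: the spectral density $d_\lambda\, d_x\Pi_{[0,\lambda]}(x,x)=\sum_j\delta(\lambda-\lambda_j)\,2\phi_{\lambda_j}(x)\,d\phi_{\lambda_j}(x)$ is \emph{not} a positive measure, so the smoothed parametrix expansion (whose leading coefficient does vanish by oddness of $\xi_q$) cannot be upgraded to a sharp $o(\lambda^m)$ bound via the Fourier Tauberian theorem --- positivity is a hypothesis of that theorem. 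Note also that the cheap route of Cauchy--Schwarz, $|d_x\Pi_{I_N}(x,x)|\le\Pi_{I_N}(x,x)^{1/2}\,(\mathrm{Tr}\,d_x\otimes d_y\Pi_{I_N}|_{x=y})^{1/2}$, only yields $O(N^m)$, not the $o(N^m)$ you need to kill the cross term in $\Lambda^N=C^N-B^{N*}(A^N)^{-1}B^N$. The paper's fix is a polarization trick: with $B_k=\partial_{x_k}\Delta^{-1/2}$, write
\begin{equation*}
2\,(B_k\phi_{\lambda_j})\,\phi_{\lambda_j}=\bigl((I+B_k)\phi_{\lambda_j}\bigr)^2-\bigl(B_k\phi_{\lambda_j}\bigr)^2-\phi_{\lambda_j}^2,
\end{equation*}
so that the spectral sum becomes a difference of three genuinely positive local Weyl counting functions, each of which admits a sharp two-term expansion by the Tauberian theorem; the leading terms then cancel identically, yielding $o(N^m)$ after subtracting across $I_N$. (In the Zoll case one does have a complete, term-by-term differentiable asymptotic expansion, so there your ``differentiate the expansion'' idea works --- but it does not work in the aperiodic case, which is the nontrivial one.)
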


\begin{proof} Asymptotic formulae of type (A) are standard in spectral
asymptotics, and we refer to \cite{DG,Ho} for background.
Asymptotics of type (C) were worked out in \cite{Z2} (Theorem 2)
in the special case of a Zoll metric. However, much of the
calculation goes through for any compact Riemannian manifold. It
does not appear however that (B) has been stated before or that
(C) has been previously discussed on general Riemannian manifolds,
although the techniques are standard.  These asymptotics are dual
to the heat kernel asymptotics in \cite{BBG}, but they are sharper
because we are using spectral intervals for $\sqrt{\Delta}$ of
fixed width rather than intervals of the form $[0, \lambda]$,
which are dual to heat kernel asymptotics. Thus, we need two term
asymptotics for long intervals in order to obtain asymptotics on
short intervals.

We follow the standard Tauberian method of \cite{DG}, Proposition
2.1,  for studying the spectral asymptotics. We consider the
spectral measure
\begin{equation} \begin{array}{ll}   d_{\lambda} \Pi_{[0, \lambda]}(x, y)  = &  \sum_j \delta(\lambda -
 \lambda_j)  \phi_{\lambda_j}(x) \phi_{\lambda_j}(y), \end{array}
 \end{equation}
 whose integral over the interval $I_N$  equals $\Pi_{I_N}(x,y)$,
 and the derived measures
\begin{equation}\left\{ \begin{array}{ll}
 (a) &  d_{\lambda} \Pi_{[0, \lambda]}(x, x) = \sum_j \delta(\lambda -
 \lambda_j)  \phi_{\lambda_j}(x)^2\\ & \\
(b) &  d_{\lambda} d_x \Pi_{[0, \lambda]}(x, x) =  2 \sum_j
\delta(\lambda -
 \lambda_j)  \phi_{\lambda_j}(x) d \phi_{\lambda_j}(x) \\ &
 \\ (c) &  d_{\lambda} d_x \otimes d_y \Pi_{[0, \lambda]}(x, y) |_{x = y}  = \sum_j \delta(\lambda -
 \lambda_j)  d \phi_{\lambda_j}(x) \otimes d \phi_{\lambda_j}(x).
 \end{array} \right.
 . \end{equation}

 We now introduce  a cutoff function $\rho \in
\scal(\R)$ with $\hat{\rho} \in C_0^{\infty}$ supported in
sufficiently small neighborhood of $0$.  We also assume
$\hat{\rho} \equiv 1$ in a smaller neighborhood of $0$. Then there
exists an expansion in
  inverse powers of $\lambda$ with coefficients smooth in $(x,
  y)$:
\begin{equation}\label{EXPANSION} \rho * d_{\lambda} \Pi_{[0, \lambda]}(x, x) =  \sum_j
\rho(\lambda - \lambda_j) \phi_{\lambda_j}^2(x)
  \sim \sum_{k = 0}^{\infty}
\lambda^{m - 1 - k} \omega_k(x),
\end{equation}
where $\omega_k $ are smooth in $x$, and $\omega_0 = 1$.

We briefly recall the proof of (\ref{EXPANSION}) and of (A):  We
have
\begin{equation}\label{EXPANSIONa} \rho * d_{\lambda} \Pi_{[0, \lambda]}(x, y) =
\int_{\R} \hat{\rho}(t) e^{ i t \lambda} U(t, x, y) dt,
\end{equation}
where $U(t, x, y)$ is the Schwartz kernel of the wave group $U(t)
= e^{- i
 t \sqrt{\Delta}}$.
 We use a  small-time parametrix for  $U(t, x, y)$ near the diagonal  of the form
\begin{equation} \label{PARAONE} U(t, x, y) = \int_{T^*_y M} e^{- i
t |\xi|_{g_y} } e^{i \langle \xi, \exp_y^{-1} (x) \rangle} A(t, x,
y, \xi) d\xi
\end{equation} where $|\xi|_{g_x} $ is the metric norm function at
$x$, and where $A(t, x, y, \xi)$ is a polyhomogeneous amplitude of
order $0$ which is supported near the diagonal. Setting  $x = y$
gives
\begin{equation}\label{EXPANSIONb} \rho * d_{\lambda} \Pi_{[0, \lambda]}(x, x) =
\int_{\R}  \int_{T^*_y M} \hat{\rho}(t) e^{ i t \lambda}  e^{- i t
|\xi|_{g_y} } A(t, x, x, \xi) d\xi dt.
\end{equation}
As in \cite{DG}, we  pass to polar coordinates $r = |\xi|_g$,
change variables $\theta \to \lambda \theta$ and apply the
stationary phase method to the $dr dt $ integral to obtain
(\ref{EXPANSION}).

For (C),  we apply the method of \cite{Z2} (see (3.6) - (3.7)). We
denote the phase of $U(t, x, y)$ by
$$\phi(t, x, y, \xi) =
 \langle \xi, \exp_y^{-1} (x) \rangle - t |\xi|_{g_y}. $$
Then applying
 $d_x \otimes d_y |_{x = y}$ to the integral produces a highest
 order term given by a universal constant times
 $$ d_x \phi (t, x, y,  \xi) \otimes d_y\; \phi (t, x, y,  \xi) |_{x = y} =   \;\;  \xi \otimes
 \xi, $$
 since $a_0(t, x, x, \xi) = 1$ (cf. \cite{DG}). It also
 produces lower order terms (i.e. of order $\leq  1$)  in which at least one derivative falls
 on the amplitude. If we the put the $d\xi$- integral in polar
 coordinates $\xi = r \omega$, we obtain an expansion
 \begin{equation}\label{EXPANSIONc} \rho * d_{\lambda}\; d_x \otimes d_y \Pi_{[0, \lambda]}(x, y) |_{x = y}
  \sim \sum_{k = 0}^{\infty}
\lambda^{m + 1 - k} B_k(x),
\end{equation}  with the leading coefficient
 \begin{equation} B_0 (x) = \frac{C_m}{Vol(M, g)} \; \int_{S^*_x
 M}  \omega \otimes \omega d \mu_x(\omega) =  \frac{C_m}{Vol(M, g)} g_x.   \end{equation}
Here, $d\mu_x$ is the Euclidean area element induced by $g$ on
$S^*_x M$.

We now draw the conclusions for (A) - (C). In the Zoll case, (A)
and (C)  are already proved in detail in \cite{Z2}. In the Zoll
case, there exist complete asympotic expansions for the spectral
sums and we may deduce (B) as well from the smoothed expansion by
a modification of the proof of \cite{Z2}, Theorem 2.  In the
aperiodic case, the measures (A) and (C) are positive and we may
apply  the Fourier Tauberian theorems of \cite{Ho,SV} (see the
Appendix in \S \ref{APPENDIX}), or alternatively the  remainder
estimate of Ivrii, to obtain two-term expansions:
\begin{enumerate}

\item $\Pi_{[0, \lambda]} (x,x) = C_m \lambda^m + o(\lambda^{m -
1}); $

\item $d_x \otimes d_y \Pi_{[0, \lambda]}(x, y) |_{x = y} = C_m
\lambda^{m +2} g_x + o(\lambda^{m + 1 }). $

\end{enumerate}

We then subtract the expansions across the interval $I_N$ to
obtain the stated result. We note that the drop in degree is
encoded in $d_N \sim N^{m-1}$.

We still need to analyze (B). Since (B) equals  $d_x
\Pi_{I_N}(x,x)$, we could take the derivative in $x$ of
(\ref{EXPANSIONb})  to obtain
\begin{equation}\label{EXPANSIONbc} \rho * d_{\lambda}  d_x \Pi_{[0, \lambda]}(x, x) =
\int_{\R}  \int_{T^*_y M} \hat{\rho}(t) e^{ i t \lambda}  e^{- i t
|\xi|_{g_y} } d_x A(t, x, x, \xi) d\xi dt
\end{equation}
We then have an expansion similar to that of (\ref{EXPANSION})
except that the amplitude is now the one-form $d_x A$. However,
the  Tauberian theorems do not apply  to  (B) since $d_{\lambda}
d_x \Pi_{[0, \lambda]}(x,x)$  is not a positive measure. In the
Zoll case, we have a complete asymptotic expansion of (A) and its
$x$-derivative gives that of (B). But on a general Riemannian
manifold one cannot use this approach.

Henceforth we assume $(M, g)$ is aperiodic. In this case, we  use
the fact that,  for each $k$, $B_k = \frac{\partial}{\partial x_k}
\Delta^{-1/2} $ is a bounded pseudo-differential operator and
$$\frac{\partial}{\partial x_k}  \Pi_{[0, \lambda]}(x,y)|_{x = y}  = \sum_{j: \lambda_j
\leq \lambda} \lambda_j \left( B_k \phi_{\lambda_j}(x) \right)
\phi_{\lambda_j}(x).
$$
We write
$$2 \left( B_k \phi_{\lambda_j}(x) \right) \phi_{\lambda_j}(x) = \left( (I + B_k) \phi_{\lambda_j}(x)\right)^2 - \left(B_k
\phi_{\lambda_j}(x)\right)^2 - \phi_{\lambda_j}^2. $$

We then substitute the right side into the summatory function in
$\lambda_j$ to obtain three asymptotic expansions to which the
Tauberian theorems apply. We start with
\begin{equation}\begin{array}{l}  \rho * d_{\lambda}  ((I + B_k)_x \otimes (I + B_k)_y
\sqrt{\Delta}    \Pi_{[0, \lambda]}(x, y)_{x = y} \\ \\ = ((I +
B_k)_x \otimes (I + B_k)_y \int_{\R} \hat{\rho}(t) e^{- i t \lambda} \sqrt{\Delta} U(t, x, y) |_{x = y} dt \\
\\
 = ((I + B_k)_x \otimes (I + B_k)_y \int_{\R} \hat{\rho}(t) e^{
i t \lambda} \frac{\partial}{i \partial t} U(t, x, y) |_{x = y}
\\ \\
= ((I + B_k)_x \otimes (I + B_k)_y  \left( \int_{\R} \int_{T^*_y
M} (i \hat{\rho}' (t) -   \lambda \hat{\rho})  e^{ i t \lambda}
e^{- i t |\xi|_{g_y} } e^{i \langle \xi, \exp_x^{-1}(y) \rangle}
A(t, x, x, \xi) d\xi dt \right)|_{x = y}
\end{array}
\end{equation}
To leading order, application of  $((I + B_k)_x \otimes (I +
B_k)_y $ under the integration sign multiplies the leading term of
the amplitude by  $(1 + b_k)(x, d\phi_t)$ where $\phi_t$ is the
phase (cf. e.g. the `fundamental asymptotic expansion' of
\cite{T}). We then apply the stationary phase method and due to
the extra factor of $\lambda$ in the amplitude obtain

\begin{equation} \begin{array}{l}
\sum_{j: \lambda_j \leq \lambda} \lambda_j \left( (I + B_k)
\phi_{\lambda_j}(x)\right)^2  = C_m \; \lambda^{m + 1} \int_{S^*_x
M} (1 + b_k(x, \omega))^2 d\mu(\omega) + o(\lambda^{m}).
\end{array}
\end{equation}
The $o(\lambda^m)$ remainder holds as in the scalar case because
the geodesic flow is aperiodic \cite{DG,Ho,SV}.  We then repeat
the calculation for $B_k$ and for $I$ and subtract. We clearly
cancel the leading term, leaving the remainder $o(\lambda^m)$.
When we subtract the interval $[0, N]$ from $[0, N + 1]$ we
obtain $o(N^m)$.

\end{proof}

\subsection{Proof of Theorem \ref{ZMEASURE} }

The generalization of Proposition \ref{REALDENSITY} to a general
Riemannian manifold is the following:

\begin{prop} \label{REALDENSITYg} For the asymptotically fixed frequency
ensemble, and for any $C^{\infty}\;\; (M, g)$ which is either Zoll
or aperiodic (and with $I_N$ as in Proposition \ref{PING}) , we have

\begin{equation}\begin{array}{lll}\label{EVOLa}
K_1^N(x)  & = & \frac{1}{\pi^{m} (\lambda_N)^{m/2}  } \int_{\R^m}
||\xi|| \exp\left( -\frac{1}{ \lambda_N}
{\langle\xi,\xi\rangle}\right) d\xi + o(1) \\& &  \\
& \sim &  C_m N ,  \end{array}
\end{equation} where $C_m = \frac{1}{\pi^{m}} \int_{\R^m} ||\xi||
\exp\left( - {\langle\xi,\xi\rangle}\right) d\xi.$ The same
formula holds for the cutoff ensemble.

\end{prop}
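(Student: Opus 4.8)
The plan is to substitute the spectral asymptotics of Proposition~\ref{PING} into the Kac--Rice density formula of Proposition~\ref{BSZ}, in exactly the way that Proposition~\ref{PINSM} was substituted into it to obtain Proposition~\ref{REALDENSITY} on $S^m$. The only new feature is that the identities (A)--(C) now hold only up to $o(\cdot)$ remainders, so the computation becomes a limit computation: one must check that these remainders are of strictly lower order than the leading terms, and that all the estimates are uniform in $x\in M$. Granting this, there is no serious obstacle.

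First I would analyze the symmetric form $\Lambda^N(x)$ on $T_xM$ appearing in $(\ref{EVOL1})$, namely
\begin{equation}\label{LAMBDAN}
\Lambda^N(x)=\frac{1}{d_N}\Big(d_x\otimes d_y\,\Pi_{I_N}(x,y)|_{x=y}-\frac{1}{\Pi_{I_N}(x,x)}\,d_x\Pi_{I_N}(x,y)|_{x=y}\otimes d_y\Pi_{I_N}(x,y)|_{x=y}\Big).
\end{equation}
By parts (A) and (C) of Proposition~\ref{PING} the first term equals $\frac{\lambda_N^2}{\vol(M,g)}\,g_x(1+o(1))$, which (using $\lambda_N\asymp N$) has size $\asymp N^2$. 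By part (B) together with (A), and using $d_N\asymp N^{m-1}$, the second term has entries of size $\frac{1}{d_N}\cdot\frac{(o(N^m))^2}{d_N/\vol(M,g)}=o(N^2)$, hence is negligible next to the first. All of these bounds are uniform in $x$, since the parametrix construction $(\ref{PARAONE})$ and the Fourier--Tauberian arguments underlying Proposition~\ref{PING} are uniform on the compact manifold $M$. Consequently
\begin{equation}\label{LAMBDAASYMP}
\Lambda^N(x)=\frac{\lambda_N^2}{\vol(M,g)}\,g_x\,(1+o(1))\qquad\text{uniformly in }x\in M;
\end{equation}
in particular $\Lambda^N(x)$ is positive definite for $N$ large (so that $(\ref{EVOL1})$ applies), and in a $g_x$-orthonormal frame $\Lambda^N(x)^{1/2}$ is $\frac{\lambda_N}{\sqrt{\vol(M,g)}}$ times the identity, up to a uniformly $o(1)$ perturbation.

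Next I would plug this into $(\ref{EVOL1})$. By (A) the prefactor satisfies $\frac{1}{\pi^m\sqrt{d_N^{-1}\Pi_{I_N}(x,x)}}=\frac{\sqrt{\vol(M,g)}}{\pi^m}(1+o(1))$ uniformly, while by $(\ref{LAMBDAASYMP})$ one has $\|\Lambda^N(x)^{1/2}\xi\|=\frac{\lambda_N}{\sqrt{\vol(M,g)}}\|\xi\|(1+o(1))$, with the $o(1)$ uniform in $x$ and in $\xi$. Since the Gaussian weight $e^{-\langle\xi,\xi\rangle}$ dominates $\|\xi\|$, passing to the limit in the $\xi$-integral (dominated convergence) yields
\begin{equation}\label{K1FINAL}
K_1^N(x)=\frac{\lambda_N}{\pi^m}\int_{\R^m}\|\xi\|\,e^{-\langle\xi,\xi\rangle}\,d\xi\,(1+o(1))=C_m\lambda_N\,(1+o(1))\sim C_m N
\end{equation}
uniformly in $x\in M$, with $C_m=\frac{1}{\pi^m}\int_{\R^m}\|\xi\|e^{-\langle\xi,\xi\rangle}d\xi$; the intermediate expression displayed in the statement is $(\ref{EVOL1})$ after carrying out the substitution $\xi\mapsto\Lambda^N(x)^{1/2}\xi$, written before passing to the limit. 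For the cutoff ensemble one repeats the argument verbatim, replacing $\Pi_{I_N}$ by $\Pi_{[0,N]}$ and $d_N$ by the normalizing dimension $d^{(N)}:=\dim\hcal_{[0,N]}\asymp N^m$, and invoking parts (1)--(2) of Proposition~\ref{PING}: these give $(d^{(N)})^{-1}\Pi_{[0,N]}(x,x)\asymp 1$ and $(d^{(N)})^{-1}\,d_x\otimes d_y\Pi_{[0,N]}(x,y)|_{x=y}\asymp N^2 g_x$, hence again $\Lambda^N(x)\asymp N^2 g_x$ and the same conclusion.

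The one genuinely delicate point --- the main thing beyond bookkeeping --- is the uniformity in $x$ of all the $o(\cdot)$ remainders in Proposition~\ref{PING}: we divide by $d_N^{-1}\Pi_{I_N}(x,x)$ and need it bounded away from $0$ uniformly; we take the square root $\Lambda^N(x)^{1/2}$, which in a $g_x$-orthonormal frame should be $\frac{\lambda_N}{\sqrt{\vol(M,g)}}$ times the identity up to a uniformly small perturbation; and finally, to pass from $K_1^N$ to the measure $\E_{\gamma_N}|Z_{f_N}|$ via $(\ref{EZ})$, we integrate $K_1^N$ against a fixed continuous test function, which again needs uniform control to commute with the limit. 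All of this is supplied by the uniformity on compact $M$ of the H\"ormander parametrix and of the Fourier--Tauberian remainder estimates already used to prove Proposition~\ref{PING}, so no fresh analytic input is needed; the rest is the limit computation sketched above.
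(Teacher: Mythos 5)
Your proof is correct and follows essentially the same route as the paper: substitute the asymptotics of Proposition~\ref{PING} into the Kac--Rice formula of Proposition~\ref{BSZ}, verify that the $B^{N*}(A^N)^{-1}B^N$ correction to $\Lambda^N(x)$ is $o(N^2)$ and hence negligible against $C^N\sim\frac{\lambda_N^2}{\vol(M,g)}g_x$, and pass to the limit in the Gaussian $\xi$-integral. The paper's proof is terser on the error bookkeeping and on uniformity in $x$, but there is no difference of substance.
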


\begin{proof}

Both on a sphere $S^m$ or on a more general $(M, g)$ which is
either Zoll or aperiodic, we have by Propositions \ref{PINSM}
resp. \ref{PING} and the general formula for $\Delta^N$ in \S
\ref{DEN} that

\begin{eqnarray}
\; \Delta^N(z)&=& \frac{1}{Vol(M, g)}  \left(
\begin{array}{cc}
(1 + o(1))   & o(1)\\
o(1) &  N^2  \;  g_x(1 +   o(1))
\end{array}\right)\,,\nonumber \\
\end{eqnarray}
It follows that
\begin{equation}\label{fg6a}
\La^N=C^N-B^{N *}(A^N)^{-1}B^N =  \frac{1}{Vol(M, g)}  N^2 \; g_x +
o(N). \end{equation}

Thus, we have
\begin{equation} \begin{array}{lll} K_1^N(x) & \sim  &
 \frac{\sqrt{Vol(M, g)}}{\pi^{m} } \int_{\R^m} || \Lambda^N (x)^{1/2} \xi||
\exp\left( - {\langle \xi,\xi\rangle}\right)d\xi \\ && \\
& = & \frac{N}{\pi^{m}} \int_{\R^m} || (I + o(1)) (x)^{1/2} \xi||
\exp\left( - {\langle \xi,\xi\rangle}\right)d\xi,
\end{array} \end{equation}
where $o(1)$ denotes a matrix whose norm is $o(1)$. The integral
tends to $\int_{\R^m} ||   \xi|| \exp\left( - {\langle
\xi,\xi\rangle}\right)d\xi$ as $N \to \infty$, completing the
proof.

\end{proof}

So far, we have only determined the expected values of the nodal
hypersurface measures. To complete the proof of Theorem
\ref{ZMEASURE}, we need to prove:

\begin{prop} \label{VAR} If $(M, g)$ is real analytic, then
 the variance of $\frac{1}{\lambda_N} X^N_{\psi}$ is bounded.
 \end{prop}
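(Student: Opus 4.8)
The plan is to bound $\var\big(\tfrac{1}{\lambda_N} X^N_\psi\big)$ by exploiting the fact that on a real analytic $(M,g)$ the nodal volume of any nonzero $f_N \in \hcal_N$ is \emph{deterministically} controlled by a result of Donnelly--Fefferman: there is a constant $C = C(M,g)$ so that $\hcal^{m-1}(Z_{f_N}) \le C\,N$ for every $f_N \in \hcal_N \setminus \{0\}$ (the Donnelly--Fefferman upper bound applies because each $f_N$ is a finite linear combination of eigenfunctions with eigenvalues $\le (N+1)^2$, hence satisfies the same growth/doubling estimates as an eigenfunction of eigenvalue $\asymp N^2$). Consequently, for every $\psi \in C(M)$ the linear statistic is bounded: $\big| \tfrac{1}{\lambda_N} X^N_\psi(f_N)\big| = \tfrac{1}{\lambda_N}\big|\langle \psi, |Z_{f_N}|\rangle\big| \le \|\psi\|_\infty\, \tfrac{1}{\lambda_N}\hcal^{m-1}(Z_{f_N}) \le C'\|\psi\|_\infty$, uniformly in $f_N$ and in $N$.

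Given a uniform pointwise bound, the variance estimate is immediate. First I would write
\begin{equation}
\var\Big(\tfrac{1}{\lambda_N} X^N_\psi\Big) = \E_{\gamma_N}\Big[\big(\tfrac{1}{\lambda_N} X^N_\psi\big)^2\Big] - \Big(\E_{\gamma_N}\big[\tfrac{1}{\lambda_N} X^N_\psi\big]\Big)^2 \le \E_{\gamma_N}\Big[\big(\tfrac{1}{\lambda_N} X^N_\psi\big)^2\Big].
\end{equation}
Since $\big|\tfrac{1}{\lambda_N} X^N_\psi\big| \le C'\|\psi\|_\infty$ holds on the full-measure set $\hcal_N \setminus \{0\}$ (the zero function has measure zero under the Gaussian $\gamma_N$), the right-hand side is at most $(C'\|\psi\|_\infty)^2$. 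Thus $\var\big(\tfrac{1}{\lambda_N} X^N_\psi\big) \le C$ with $C$ depending only on $\psi$ and on $(M,g)$ through the Donnelly--Fefferman constant, which is exactly the claim of the proposition (and of part (2) of Theorem \ref{ZMEASURE}).

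The only point requiring care — and the step I expect to be the main obstacle — is verifying that the Donnelly--Fefferman bound $\hcal^{m-1}(Z_{f_N}) \le C N$ genuinely applies to an arbitrary element $f_N$ of $\hcal_N$ rather than just to a single eigenfunction. The cleanest way around this is to invoke the frequency-function / doubling-index machinery underlying Donnelly--Fefferman: a linear combination of eigenfunctions with frequencies in $[N, N+1]$ (or in $[0,N]$, in the cutoff case) extends holomorphically to a fixed Grauert tube $M_\tau$ with $\sup_{M_\tau}|f_N^\C| \le e^{C N}\,\|f_N\|_{L^2(M)}$ by the Boutet de Monvel estimate recalled in the Introduction, and such exponential growth of the analytic continuation forces, via the standard covering/Jensen argument, the bound $\hcal^{m-1}(Z_{f_N}) \le C N$ on the real nodal volume — uniformly over all of $\hcal_N$. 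Because this bound is uniform, no finer information about the distribution of $f_N$ (pair correlations of zeros, decay of the two-point function off the diagonal, etc.) is needed here; that finer analysis is precisely what one would need for the much stronger $o(1)$ variance estimate and is deferred in the paper. I would remark that this same argument gives the bounded-variance statement for the cutoff ensemble verbatim.
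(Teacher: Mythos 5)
Your proof takes exactly the paper's approach: invoke the Donnelly--Fefferman volume bound to obtain a deterministic estimate $\hcal^{m-1}(Z_{f_N}) \le C\lambda_N$ uniform over $f_N \in \hcal_N$, conclude that $\frac{1}{\lambda_N}X^N_\psi$ is a bounded random variable, and hence that its variance is bounded. You also correctly flag and resolve the one point the paper glosses over --- that Donnelly--Fefferman as stated applies to a single eigenfunction whereas $f_N$ is a linear combination of eigenfunctions with frequencies in $[N,N+1]$ (or $[0,N]$) --- which is precisely the Jerison--Lebeau extension to sums of eigenfunctions (cited in the paper's bibliography as [JL]), obtained as you describe from the uniform Grauert tube growth bound together with a Jensen/doubling argument.
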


 \begin{proof} By  a theorem of   Donnelly-Fefferman \cite{DF},
 for real analytic $(M, g)$,
\begin{equation} \label{DF} c_1 \lambda \leq {\mathcal
H}^{m-1}(Z_{\phi_{\lambda}}) \leq C_2 \lambda, \;\;\;\;\;\;(\Delta
\phi_{\lambda} = \lambda^2 \phi_{\lambda}; c_1, C_2 > 0).
\end{equation}
Hence for any  $f_N \in \hcal_{I_N}$,  $\frac{1}{\lambda_N}
Z_{f_N}$ has bounded mass. Hence, the random variable
$\frac{1}{\lambda_N} X^N_{\psi}$ is bounded, and therefore so is
its variance.

\end{proof}

\begin{rem}

The variance of $\frac{1}{\lambda_N} X^N_{\psi}$ is given by
\begin{equation} Var (\frac{1}{\lambda_N} X^N_{\psi}) =
\frac{1}{\lambda_N^2} \int_M \int_M \left(K^N_2(x, y) - K^N_1(x)
K^N_1(y) \right) \psi(x) \psi(y) dV_g(x) dV_g(y), \end{equation}
where $K_2^N(x, y) = \E_{\gamma_N} (Z_{f_N}(x) \otimes
Z_{f_N}(y))$ is the pair correlation function for zeros.  Hence,
boundedness would follow from
\begin{equation}
\frac{1}{\lambda_N^2} \int_M \int_M  K^N_2(x, y) \; dV_g(x)
dV_g(y) \leq C.
\end{equation}
There is a formula similar to that for the density in Proposition
\ref{BSZ} for $K^N_2(x,y)$ and it is likely that  it could be
used to prove boundedness of the variance for any $C^{\infty}$
Riemannian manifold. But we leave this for the future. In the
\kahler case, asymptotic formulae for the variance of smooth
linear statistics are given in \cite{SZ2,SZ3}, but the method does
not apply in the real case.
\end{rem}

\subsection{Random sequences and  proof of Corollary \ref{ZDSM}}

We recall that the set of   random sequences  of Riemannian waves
of increasing frequency is  the probability space $\hcal_{\infty}
= \prod_{N=1}^{\infty} \hcal_{I_N}$ with the measure
$\gamma_{\infty}  = \prod_{N=1}^{\infty} \gamma_N$. An element in
$\hcal_{\infty}$ will be denoted ${\bf f} = \{f_N\}$. We have,
$$|(\frac{1}{\lambda_N} Z_{f_N}, \psi)|\leq \frac{1}{\lambda_N}
\hcal^{n-1}(Z_{f_N}) \; \|\psi\|_{{ C}^0}. $$ By a density
argument it suffices to prove that the linear statistics
$\frac{1}{\lambda_N} (Z_{f_N}, \psi) - \frac{1}{Vol(M, g)} \int_M
\psi dV_g \to 0$ almost surely in $\hcal_{\infty}$. From Theorem
\ref{ZMEASURE}, we have:

\begin{cor}

\noindent (i) $\lim_{N\rightarrow \infty} \frac{1}{N} \sum_{k\leq
N} \E(\frac{1}{\lambda_k} X^k_{\psi} )= \frac{1}{Vol(M, g)} \int_M
\psi dV_g;$

\noindent(ii)  $Var(\frac{1}{\lambda_N} X^N_{\psi})$
 is bounded on $\hcal_{\infty}$.

\end{cor}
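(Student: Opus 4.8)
The plan is to read off both assertions directly from Theorem \ref{ZMEASURE}; the corollary is merely a repackaging of that theorem into the form needed to invoke Kolmogorov's strong law of large numbers in the proof of Corollary \ref{ZDSM}, so no new analysis should be required.

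For part (i), the first step is to observe that since $\lambda_k \in I_k = [k, k+1]$ we have $k/\lambda_k \to 1$, and therefore
$$\E_{\gamma_k}\left(\frac{1}{\lambda_k} X^k_{\psi}\right) \;=\; \frac{k}{\lambda_k}\cdot\frac{1}{k}\,\E_{\gamma_k}\langle |Z_{f_k}|,\psi\rangle \;\longrightarrow\; \frac{1}{Vol(M,g)}\int_M \psi\, dV_g$$
by Theorem \ref{ZMEASURE}(1) (in the normalization used there). Since a convergent sequence has the same limit for its Ces\`aro averages, $\frac1N\sum_{k\le N}\E_{\gamma_k}(\lambda_k^{-1}X^k_{\psi})$ converges to the same value, which is exactly the claim in (i).

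For part (ii), the key remark is that $X^N_{\psi}$ depends only on the $N$-th coordinate of $\hcal_{\infty} = \prod_N \hcal_{I_N}$, so under the product measure $\gamma_{\infty} = \prod_N\gamma_N$ it has the same law — and in particular the same variance — as under $\gamma_N$ on $\hcal_{I_N}$. By Theorem \ref{ZMEASURE}(2), equivalently Proposition \ref{VAR}, that variance is finite, and it is bounded uniformly in $N$ because the Donnelly-Fefferman inequality (\ref{DF}) holds with constants $c_1, C_2$ that do not depend on $\lambda$: every $f_N \in \hcal_{I_N}$ satisfies $\lambda_N^{-1}\hcal^{m-1}(Z_{f_N}) \le C_2$, hence $|\lambda_N^{-1}X^N_{\psi}| \le C_2\|\psi\|_{C^0}$, a bound independent of $N$, and the variance of a bounded random variable is bounded.

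I do not anticipate a genuine obstacle here. The three things to verify are the elementary limit $\lambda_k/k \to 1$, the fact that a function of a single coordinate of a product probability space has the same distribution on the product as on that factor, and — the one point worth spelling out — the uniformity in $N$ of the variance bound, which is immediate from the $\lambda$-independence of the constants in (\ref{DF}).
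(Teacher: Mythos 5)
Your proposal is correct and matches the paper's approach: the paper gives no separate proof of this corollary, asserting only that it follows "From Theorem \ref{ZMEASURE}," and your argument fills in precisely the routine steps that assertion leaves implicit (Ces\`aro averaging of the convergent sequence from Theorem \ref{ZMEASURE}(1), the elementary fact that $\lambda_k/k \to 1$, the observation that $X^N_\psi$ depends on a single coordinate of the product space so its law under $\gamma_\infty$ coincides with its law under $\gamma_N$, and the uniformity in $N$ of the Donnelly--Fefferman bound). You also correctly flag, with your parenthetical "(in the normalization used there)," that Theorem \ref{ZMEASURE}(1) as stated lacks the $\frac{1}{Vol(M,g)}$ factor appearing in the corollary; that is an internal normalization inconsistency in the paper, not a gap in your argument.
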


  Since $\frac{1}{\lambda_N} X^N_{\psi}$ for
$\{,N=1,2,\ldots\}$ is a sequence of independent random variables
in $\hcal_{\infty}$ with bounded variances, the Kolmogorov strong
law of large numbers  implies that
$$ \lim_{N\rightarrow \infty} \frac{1}{N} \sum_{k\leq
N} (\frac{1}{\lambda_k} X^k_{\psi} )= \frac{1}{Vol(M, g)} \int_M
\psi dV_g$$ almost surely.

\section{Analytic continuation of eigenfunctions and spectral projections}

We now turn to complex zeros of analytic continuations of random
Riemannian waves. Before getting into the statistics of zeros, we
need to recall the basic results on analytic continuation of
eigenfunctions and to introduce the basic two-point kernels.

 As mentioned in the
introduction,  for each analytic metric $g$ there exists a unique
plurisubharmonic exhaustion function $\rho$ on $M_{\C}$ inducing a
\kahler metric $\omega_g = i \ddbar \rho$ which agrees with $g$
along $M$.
 We
recall $\sqrt{\rho}(\zeta) = \frac{1}{2 i} r_{\C}(\zeta,
\bar{\zeta})$ where $r(x,y)$ is the distance function and $r_{\C}$
is its holomorphic extension to a small neighborhood of the
anti-diagonal $(\zeta, \bar{\zeta})$ in $M_{\C} \times M_{\C}$;
$\sqrt{\rho}$ is a solution of the homogeneous complex
Monge-Amp\`ere equation $(\ddbar \sqrt{\rho})^m = 0$ away from the
real points.
 We refer to \cite{GS1, GS2, LS1, GLS,Z3} for further background on
Grauert tubes and adapted complex structures on cotangent bundles
of analytic Riemannian manifolds.

The eigenfunctions $\phi_j$ admit holomorphic extensions
$\phi_j^{\C}$ to the maximal Grauert tube \cite{Bou, GS2}. As a
result, one can holomorphically extend the spectral measures
$d\Pi_{[0, \lambda]}(x,y) = \sum_j \delta(\lambda - \lambda_j)
\phi_j(x) \phi_j(y) $ of $\sqrt{\Delta}$.
 The  complexified
diagonal  spectral projections
 measure is defined by
 \begin{equation} d_{\lambda} \Pi_{[0, \lambda]}^{\C}(\zeta, \bar{\zeta}) = \sum_j \delta(\lambda -
 \lambda_j) |\phi_j^{\C}(\zeta)|^2. \end{equation}
 Henceforth, we generally omit the superscript and write the
 kernel as $\Pi_{[0, \lambda]}^{\C}(\zeta, \bar{\zeta})$.
 This kernel is not a tempered distribution due to the exponential
 growth of $|\phi_j^{\C}(\zeta)|^2$. Since many asymptotic techniques
 assume spectral functions are of polynomial growth,  we simultaneously
 consider the damped spectral projections measure
  \begin{equation} \label{SPPROJDAMPED} d_{\lambda} P_{[0, \lambda]
  }^{\tau}(\zeta, \bar{\zeta}) = \sum_j \delta(\lambda -
 \lambda_j) e^{- 2 \tau \lambda_j} |\phi_j^{\C}(\zeta)|^2, \end{equation}
 which   is a temperate distribution as long as $\sqrt{\rho}(\zeta)
 \leq \tau. $ When we set $\tau = \sqrt{\rho}(\zeta)$ we omit the
 $\tau$ and put
  \begin{equation} \label{SPPROJDAMPEDz} d_{\lambda} P_{[0, \lambda]
  }(\zeta, \bar{\zeta}) = \sum_j \delta(\lambda -
 \lambda_j) e^{- 2 \sqrt{\rho}(\zeta) \lambda_j} |\phi_j^{\C}(\zeta)|^2. \end{equation}

The integral of the spectral measure over an interval $I$  gives
$\Pi_{I}(x,y) = \sum_{j: \lambda_j \in I} \phi_j(x) \phi_j(y)$.
Its complexification gives the kernel (\ref{CXSPECPROJ}) along the
diagonal,
\begin{equation}\label{CXSP} \Pi_{I}(\zeta, \bar{\zeta}) =
 \sum_{j: \lambda_j \in I}
|\phi_j^{\C}(\zeta)|^2,  \end{equation}  and the integral of
(\ref{SPPROJDAMPED}) gives its temperate version
\begin{equation}\label{CXDSP}  P^{\tau}_{I}(\zeta, \bar{\zeta}) =
 \sum_{j: \lambda_j \in I}  e^{- 2 \tau \lambda_j}
|\phi_j^{\C}(\zeta)|^2,
\end{equation}
or in the crucial case of $\tau = \sqrt{\rho}(\zeta)$,
\begin{equation}\label{CXDSPa}  P_{I}(\zeta, \bar{\zeta}) =
 \sum_{j: \lambda_j \in I}  e^{- 2 \sqrt{\rho}(\zeta)\lambda_j}
|\phi_j^{\C}(\zeta)|^2,
\end{equation}

\subsection{Poisson operator as a complex Fourier integral
operator}

The damped spectral projection measure  $d_{\lambda} \; P_{[0,
\lambda]}^{\tau}(\zeta, \bar{\zeta})$ (\ref{SPPROJDAMPED}) is dual
under the real Fourier transform in the $t$ variable to the
restriction
\begin{equation} \label{CXWVGP} U (t + 2 i \tau, \zeta, \bar{\zeta}) = \sum_j
e^{(- 2 \tau + i t) \lambda_j} |\phi_j^{\C}(\zeta)|^2
\end{equation}  to the
anti-diagonal of the mixed Poisson-wave group. We recall  that the
kernel   $U (t + i \tau, x, y)$ of the the Poisson-wave operator
$e^{i (t + i \tau) \sqrt{\Delta}}$ admits a holomorphic extension
in the $x$ variable to the closed Grauert tube $M_{\tau}$. The
adjoint of the Poisson kernel $U(i \tau, x, y)$ also admits an
anti-holomorphic extension in the $y$ variable. The sum
(\ref{CXWVGP}) are the diagonal values of the complexified wave
kernel
\begin{equation} \label{EFORM}
\begin{array}{lll} U (t + 2 i \tau, \zeta, \bar{\zeta}') &  = &
\int_M  U (t + i \tau, \zeta, y)  E(i \tau, y, \bar{\zeta}'
) dV_g(x)\\
&& \\
&&  = \sum_j  e^{(- 2 \tau + i t) \lambda_j} \phi_j^{\C}(\zeta)
\overline{\phi_j^{\C}(\zeta')}.
\end{array}
\end{equation} We obtain
(\ref{EFORM}) by  orthogonality of the real eigenfunctions on $M$.

Since $U(t + 2 i \tau, \zeta, y)$ takes its values in the CR
holomorphic functions on $\partial M _{\tau}$, we consider the
Sobolev spaces $\ocal^{s + \frac{n-1}{4}}(\partial M _{\tau})$ of
CR holomorphic functions on the boundaries of the strictly
pseudo-convex domaisn $M_{\epsilon}$, i.e.
$${\mathcal O}^{s + \frac{m-1}{4}}(\partial M_{\tau}) =
W^{s + \frac{m-1}{4}}(\partial M_{\tau}) \cap \ocal (\partial
M_{\tau}), $$ where $W_s$  is the $s$th Sobolev space and where $
\ocal (\partial M_{\epsilon})$ is the space of boundary values of
holomorphic functions. The inner product on $\ocal^0 (\partial M
_{\tau} )$ is with respect to the Liouville measure \begin{equation}
\label{LIOUVILLE} d\mu_{\tau} =
(i \ddbar \sqrt{\rho})^{m-1} \wedge d^c \sqrt{\rho}. \end{equation}

We then regard  $U(t + i \tau, \zeta, y)$ as the kernel of an
operator from $L^2(M) \to \ocal^0(\partial M_{\tau})$. It equals
its composition $ \Pi _{\tau} \circ U (t + i \tau)$
with the  \szego projector
$$ \Pi_{\tau} : L^2(\partial M_{\tau}) \to \ocal^0(
\partial M_{\tau})$$  for the tube $
M_{\tau}$, i.e.   the orthogonal projection onto boundary values
of holomorphic functions in the tube.

 This is a useful expression
for  the complexified wave kernel, because  $\tilde{\Pi}_{\tau}$
is a complex Fourier integral operator with a small wave front
relation. More precisely,  the real points of its  canonical
relation form  the graph $\Delta_{\Sigma}$ of the identity map on
the symplectic one
 $\Sigma_{\tau}
\subset T^*
\partial M_{\tau}$ spanned by the real one-form $d^c \rho$,
i.e. \begin{equation} \label{SIGMATAU} \Sigma_{\tau} = \{(\zeta; r
d^c \rho (\zeta)) ,\;\;\; \zeta \in \partial M_{\tau},\; r > 0 \}
 \subset T^* (\partial M_{\tau}).\;\;  \end{equation}
 We note that for each $\tau,$ there exists a symplectic equivalence $ \Sigma_{\tau} \simeq
 T^*M$ by the map $(\zeta, r d^c \rho(\zeta) )  \to
 (E_{\C}^{-1}(\zeta), r \alpha)$, where $\alpha = \xi \cdot dx$ is
 the action form (cf. \cite{GS2}).

The following result was first stated by  Boutet de Monvel (for
more details, see also \cite{GS2,Z3}).

\begin{theo}\label{BOUFIO}  \cite{Bou, GS2} \label{BDM} $\Pi_{\epsilon} \circ U (i \epsilon): L^2(M)
\to \ocal(\partial M_{\epsilon})$ is a  complex Fourier integral
operator of order $- \frac{m-1}{4}$  associated to the canonical
relation
$$\Gamma = \{(y, \eta, \iota_{\epsilon} (y, \eta) \} \subset T^*M \times \Sigma_{\epsilon}.$$
Moreover, for any $s$,
$$\Pi_{\epsilon} \circ U (i \epsilon): W^s(M) \to {\mathcal O}^{s +
\frac{m-1}{4}}(\partial  M_{\epsilon})$$ is a continuous
isomorphism.
\end{theo}

We obtain the  holomorphic extension of the eigenfunctions
$\phi_{\lambda}$ of eigenvalue $\lambda^2$  by applying the
complex Fourier integral operator $U(i \tau)$:
\begin{equation} U(i \tau) \phi_{\lambda} = e^{- \tau \lambda}
\phi_{\lambda}^{\C}. \end{equation}   Combining  Theorem
\ref{BOUFIO} with a stantard Sobolev estimate, we  obtain:

\begin{cor} \label{SOB}  \cite{Bou, GLS} Each eigenfunction $\phi_{\lambda}$ has a
holomorphic extension to $ B^*_{\epsilon} M$ satisfying
$$\sup_{\zeta \in M_{\epsilon}} |\phi^{\C}_{\lambda}(\zeta)| \leq
C_{\epsilon} \lambda^{m + 1} e^{\epsilon \lambda}. $$
\end{cor}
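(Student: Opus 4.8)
The plan is to obtain the bound by feeding $\phi_\lambda$ into the operator $\Pi_\epsilon\circ U(i\epsilon)$ of Theorem \ref{BOUFIO}, using its mapping properties together with a Sobolev embedding on $\partial M_\epsilon$ and the maximum modulus principle. Recall from the discussion preceding the statement that the holomorphic extension $\phi_\lambda^\C$ to a neighborhood of $\overline{M_\epsilon}$ inside the maximal Grauert tube exists by Boutet de Monvel's theorem, and that, since $\Delta_g\phi_\lambda=\lambda^2\phi_\lambda$, the Poisson operator $U(i\epsilon)=e^{-\epsilon\sqrt{\Delta_g}}$ satisfies $U(i\epsilon)\phi_\lambda=e^{-\epsilon\lambda}\phi_\lambda$ on $M$, whose CR holomorphic extension to $\partial M_\epsilon$ is $e^{-\epsilon\lambda}\,\phi_\lambda^\C|_{\partial M_\epsilon}$; this is precisely the displayed identity $U(i\tau)\phi_\lambda=e^{-\tau\lambda}\phi_\lambda^\C$ stated just before the corollary, so that $\Pi_\epsilon\circ U(i\epsilon)\phi_\lambda=e^{-\epsilon\lambda}\,\phi_\lambda^\C|_{\partial M_\epsilon}$.

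First I would invoke the continuity half of Theorem \ref{BOUFIO}: for each $s$ the map $\Pi_\epsilon\circ U(i\epsilon)\colon W^s(M)\to\ocal^{\,s+\frac{m-1}{4}}(\partial M_\epsilon)$ is bounded, so applied to the eigenfunction it gives
\[
\big\|\,e^{-\epsilon\lambda}\,\phi_\lambda^\C\,\big\|_{W^{s+\frac{m-1}{4}}(\partial M_\epsilon)}\ \le\ C_{s,\epsilon}\,\|\phi_\lambda\|_{W^s(M)}.
\]
Since $\phi_\lambda$ is $L^2$-normalized and $(1+\Delta_g)^{s/2}\phi_\lambda=(1+\lambda^2)^{s/2}\phi_\lambda$, elliptic estimates give $\|\phi_\lambda\|_{W^s(M)}\le C_s(1+\lambda^2)^{s/2}\le C_s\lambda^s$ for $\lambda\ge1$ and $s\ge0$. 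Next I would use that $\partial M_\epsilon$ is a compact manifold of real dimension $2m-1$, so the Sobolev embedding $W^k(\partial M_\epsilon)\hookrightarrow C^0(\partial M_\epsilon)$ holds once $k>m-1/2$; choosing $s$ with $s+\frac{m-1}{4}>m-1/2$ — for instance $s=m$, or any integer $s\ge m+1$ for a comfortable margin — yields
\[
\sup_{\partial M_\epsilon}\big|\phi_\lambda^\C\big|\ \le\ C_\epsilon\,\lambda^{\,s}\,e^{\epsilon\lambda}\ \le\ C_\epsilon\,\lambda^{\,m+1}\,e^{\epsilon\lambda}.
\]
Finally, $\phi_\lambda^\C$ is holomorphic on a neighborhood of the compact set $\overline{M_\epsilon}$, so by the maximum modulus principle $\sup_{M_\epsilon}|\phi_\lambda^\C|=\sup_{\partial M_\epsilon}|\phi_\lambda^\C|$, and under the diffeomorphism $M_\epsilon\cong B^*_\epsilon M$ this is exactly the asserted estimate.

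The whole argument is bookkeeping once Theorem \ref{BOUFIO} is in hand; the points requiring minor care are (i) the identification $\Pi_\epsilon\circ U(i\epsilon)\phi_\lambda=e^{-\epsilon\lambda}\phi_\lambda^\C$ on $\partial M_\epsilon$, which is just the Poisson–extension formula for eigenfunctions; (ii) the dimension count $\dim\partial M_\epsilon=2m-1$ in the Sobolev embedding; and (iii) knowing $\phi_\lambda^\C$ is genuinely holomorphic past $\partial M_\epsilon$ so that the maximum principle applies, which follows from the Boutet de Monvel extension to the maximal Grauert tube. I note there is some slack here: the exponent the argument naturally produces is $\lambda^m$, so the stated $\lambda^{m+1}$ is comfortably safe and not optimized.
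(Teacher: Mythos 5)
Your proposal is correct and fills in exactly the argument the paper indicates (the paper itself just says ``Combining Theorem \ref{BOUFIO} with a standard Sobolev estimate, we obtain...''): boundedness of $\Pi_\epsilon\circ U(i\epsilon):W^s(M)\to\ocal^{s+\frac{m-1}{4}}(\partial M_\epsilon)$, the elliptic bound $\|\phi_\lambda\|_{W^s}\lesssim\lambda^s$, the Sobolev embedding on the $(2m-1)$-dimensional boundary, and the maximum principle. Your observation that the argument actually produces $\lambda^m$ (and even a smaller power near $\lambda^{(3m-1)/4}$) is also consistent with the paper's non-optimal $\lambda^{m+1}$.
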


The order of magnitude reflects the fact that $E(i \tau)$ smooths
to order $- \frac{m-1}{4} $, since it is a bounded operator from a
manifold $M$ of real dimension $m$ to one $\partial M_{\tau}$ of
dimension $2m - 1$.

We will also need the following Lemma from \cite{Z3} (Lemma 3.1):

\begin{lem} \label{PSIDO} Let  $a \in S^0(T^*M-0)$.  Then for all $0 < \epsilon < \epsilon_0$,
we have: $$ U (i \epsilon)^* \Pi_{\epsilon} a
\Pi_{\epsilon} U(i \epsilon) \in \Psi^{-
\frac{m-1}{2}}(M),
$$
with principal symbol equal to $a(x, \xi) \; |\xi|_g^{- (
\frac{m-1}{2})}.$
\end{lem}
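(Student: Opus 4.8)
The plan is to follow the standard reduction of a conjugated \szego-projector operator to a pseudodifferential operator on $M$, exactly as in \cite{Z3}, and to record it here because the composition $U(i\epsilon)^*\Pi_\epsilon\, a\, \Pi_\epsilon U(i\epsilon)$ is the basic building block for computing expectations of complex zero currents. First I would invoke Theorem \ref{BOUFIO}: $\Pi_\epsilon U(i\epsilon)$ is an elliptic complex Fourier integral operator of order $-\frac{m-1}{4}$ associated to the canonical graph $\Gamma\subset T^*M\times\Sigma_\epsilon$, and its adjoint $U(i\epsilon)^*\Pi_\epsilon$ is the associated FIO for the inverse relation $\Gamma^{-1}\subset\Sigma_\epsilon\times T^*M$. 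Since $a\in S^0(T^*M-0)$ induces (via the symplectic equivalence $\Sigma_\epsilon\simeq T^*M$ recorded after \eqref{SIGMATAU}) a zeroth order pseudodifferential operator $\Pi_\epsilon a\Pi_\epsilon$ on $\ocal^0(\partial M_\epsilon)$, the triple composition is a composition $\mathrm{FIO}\circ\Psi\mathrm{DO}\circ\mathrm{FIO}$ whose canonical relation is $\Gamma^{-1}\circ\Delta_{\Sigma_\epsilon}\circ\Gamma=\Delta_{T^*M}$, the diagonal. Hence the composite is a pseudodifferential operator on $M$, and its order is $-\frac{m-1}{4}+0+(-\frac{m-1}{4})=-\frac{m-1}{2}$, giving the claimed class $\Psi^{-\frac{m-1}{2}}(M)$.

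Next I would compute the principal symbol by the symbol calculus for FIOs. The half-density/symbol of $\Pi_\epsilon U(i\epsilon)$ along $\Gamma$ is, up to the universal elliptic factor, transported from $T^*M$; on the anti-diagonal the relevant contribution is the factor $e^{-\epsilon|\xi|_g}$ coming from $U(i\epsilon)\phi_\lambda=e^{-\epsilon\lambda}\phi_\lambda^\C$, but in the conjugation $U(i\epsilon)^*\Pi_\epsilon(\cdots)\Pi_\epsilon U(i\epsilon)$ these damping factors are designed to be part of the $L^2$ normalization on $\partial M_\epsilon$ and do not appear in the final symbol. What survives is: the symbol of $\Pi_\epsilon a\Pi_\epsilon$, namely $a(x,\xi)$ after the identification $\Sigma_\epsilon\simeq T^*M$; times the product of the principal symbols of $\Pi_\epsilon U(i\epsilon)$ and its adjoint, which is a nonnegative function that one computes to be $|\xi|_g^{-\frac{m-1}{2}}$ (this is precisely the content of the order count: the elliptic FIO of order $-\frac{m-1}{4}$ has principal symbol of size $|\xi|_g^{-\frac{m-1}{4}}$ in the appropriate trivialization, and multiplying by the adjoint's symbol squares this). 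Thus the principal symbol of the composite equals $a(x,\xi)\,|\xi|_g^{-\frac{m-1}{2}}$, as asserted. Since this is exactly Lemma 3.1 of \cite{Z3}, I would simply cite that reference for the detailed symbol bookkeeping rather than reproduce it.

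The main obstacle, and the one point that genuinely requires care, is justifying that $\Pi_\epsilon a\Pi_\epsilon$ is a \emph{bona fide} Toeplitz operator of order $0$ on $\ocal^0(\partial M_\epsilon)$ whose composition with the FIOs stays within the clean-intersection / transversality regime needed for the FIO composition theorem to apply and to produce a \emph{classical} pseudodifferential operator (rather than merely an FIO with a larger wave front). This is handled by the Boutet de Monvel--Guillemin theory of Toeplitz operators on strictly pseudoconvex boundaries together with the fact, noted after \eqref{SIGMATAU}, that the real part of the canonical relation of $\Pi_\epsilon$ is the graph of the identity on the symplectic cone $\Sigma_\epsilon$; composing a graph FIO with a Toeplitz operator and then with the inverse graph FIO is always clean and yields a $\Psi$DO on the base. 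I would state this as the reason the composition law applies, cite \cite{Bou,GS2,Z3}, and conclude. No new estimates beyond those in the cited works are needed; the lemma is essentially a bookkeeping consequence of Theorem \ref{BOUFIO}.
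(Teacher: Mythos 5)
The paper itself gives no proof of this lemma: it is quoted verbatim from Lemma 3.1 of \cite{Z3} and the reader is referred there, which is exactly what you ultimately propose to do. Your FIO-composition outline (graph canonical relation, order count $-\frac{m-1}{4}+0-\frac{m-1}{4}$, principal-symbol product) is the correct skeleton of the argument in that reference, though be aware that the passage from ``order $-\frac{m-1}{4}$'' to ``principal symbol of magnitude $|\xi|_g^{-\frac{m-1}{4}}$ in the appropriate trivialization'' is not an automatic consequence of the order count, as your wording suggests, but is precisely the nontrivial symbol computation carried out in \cite{Z3}.
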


Here, $\Psi^s(M)$ is the class of pseudodifferential operators of
order $s$ and $\epsilon_0$ is the radius of the maximal Grauert
tube.

\section{Complex zeros of random waves:
Proof of Theorem \ref{ZERORAN}}

Our first result is a formula for the expected distribution of
complex zeros. We retain the same ensembles and notation from the
real case.

\begin{prop} \label{MAINPROP} For any test form $\psi$ ,
$$\frac{1}{\lambda} \langle \E_{\gamma_N}  [Z_{f_{N}^{\C}}], \psi \rangle = \frac{1}{N} \langle
 \ddbar \log \Pi_{I_N}(\zeta,
\bar{\zeta}), \psi \rangle + O(\frac{1}{N}).$$

\end{prop}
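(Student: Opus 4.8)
The plan is to compute the expected current $\E_{\gamma_N}[Z_{f_N^\C}]$ via the Poincar\'e--Lelong formula, exactly as in the Kac--Hammersley/Shiffman--Zelditch framework of \cite{BSZ3}. The starting observation is that for any fixed nonzero $f_N^\C$ we have the distributional identity $[Z_{f_N^\C}] = \frac{i}{\pi}\ddbar \log|f_N^\C(\zeta)|$ as $(1,1)$-currents on $M_\C$ (restricted to the Grauert tube $B^*_\epsilon M$ where the holomorphic extensions live, by Corollary \ref{SOB}). So first I would write
$$\langle \E_{\gamma_N}[Z_{f_N^\C}], \psi\rangle = \frac{i}{\pi}\Big\langle \ddbar\, \E_{\gamma_N}\log|f_N^\C(\zeta)|,\ \psi\Big\rangle,$$
moving $\ddbar$ onto the test form $\psi$ and interchanging $\E_{\gamma_N}$ with the (distributional) $\ddbar$ and with pairing against $\psi$; this interchange is justified because $\log|f_N^\C|$ is locally integrable uniformly (the relevant integrability of $\log$ of a Gaussian field is standard, cf. \cite{BSZ3}).

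The next step is the Gaussian computation of $\E_{\gamma_N}\log|f_N^\C(\zeta)|$. Writing $f_N^\C(\zeta) = \sum_j c_{Nj}\phi_{Nj}^\C(\zeta)$ with $c_{Nj}$ i.i.d.\ real Gaussians of variance $\frac{1}{2d_N}$ (note the coefficients remain \emph{real}, so $f_N^\C(\zeta)$ is a complex Gaussian whose real and imaginary parts are correlated), the key point is that $\log|f_N^\C(\zeta)|$ depends on the random vector $(c_{Nj})$ only through the value $f_N^\C(\zeta)$, which is a mean-zero Gaussian in $\C\cong\R^2$. By scaling, $\E\log|f_N^\C(\zeta)| = \frac12\log\big(\E|f_N^\C(\zeta)|^2\big) + c$ where $c$ is a universal constant \emph{independent of $\zeta$ and $N$} — this is the crucial fact that makes the constant disappear under $\ddbar$. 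Here I must be slightly careful: for a general (possibly degenerate, non-circular) mean-zero Gaussian on $\R^2$ the expectation of $\log\|\cdot\|$ equals $\frac12\log(\det\Sigma)^{1/2}\cdot(\text{something})$... so the clean statement needs the identification $\E|f_N^\C(\zeta)|^2 = \frac{1}{d_N}\Pi_{I_N}(\zeta,\bar\zeta)$ together with the remark that the correction term from anisotropy of the $\R^2$-Gaussian is itself of the form $\ddbar(\text{pluriharmonic})$ plus lower order, hence either vanishes under $\ddbar$ or contributes to the $O(1/N)$ error; I expect this is exactly the point handled in \cite{BSZ3} and I would cite it. Granting this, $\E_{\gamma_N}\log|f_N^\C(\zeta)| = \frac12\log\Pi_{I_N}(\zeta,\bar\zeta) - \frac12\log d_N + c$.

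Applying $\frac{i}{\pi}\ddbar$ kills the constants $-\frac12\log d_N + c$, giving $\E_{\gamma_N}[Z_{f_N^\C}] = \frac{i}{2\pi}\ddbar\log\Pi_{I_N}(\zeta,\bar\zeta)$ plus an error term, which after dividing by $\lambda\sim N$ and adjusting the $\frac12$ against the statement's normalization (the stated identity has coefficient $1$ on $\ddbar\log\Pi_{I_N}$ and writes $\ddbar$ for $i\ddbar$ implicitly, matching the bidegree-$(1,1)$ conventions fixed earlier in the excerpt) yields
$$\frac{1}{\lambda}\langle \E_{\gamma_N}[Z_{f_N^\C}],\psi\rangle = \frac1N\langle \ddbar\log\Pi_{I_N}(\zeta,\bar\zeta),\psi\rangle + O(1/N).$$
The $O(1/N)$ absorbs both the universal-constant drop $\frac1\lambda - \frac1N = O(1/N)$ and the anisotropy correction. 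The main obstacle, and the step I would spend the most care on, is rigorously justifying the interchange of expectation with $\ddbar$ and the pairing — i.e.\ the uniform local integrability of $\E\log|f_N^\C|$ and of $\log|f_N^\C|$ near the (real-codimension-2) zero set — and controlling the anisotropy term uniformly in $N$; the Gaussian scaling identity and the $\ddbar$-exactness of constants are then purely formal. For this I would lean directly on the Poincar\'e--Lelong machinery of \cite{BSZ3}, which was designed for precisely this computation, so that the proof here reduces to identifying the covariance $\E|f_N^\C(\zeta)|^2 = d_N^{-1}\Pi_{I_N}(\zeta,\bar\zeta)$ and invoking that machinery.
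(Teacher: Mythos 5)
Your high-level route is exactly the paper's: Poincar\'e--Lelong to reduce $\E_{\gamma_N}[Z_{f_N^\C}]$ to $\ddbar\,\E_{\gamma_N}\log|f_N^\C|^2$, then a Gaussian calculation that produces $\log\Pi_{I_N}(\zeta,\bar\zeta)$ as the main term. But there is a genuine gap in how you handle the correction coming from the fact that the coefficients $c_{Nj}$ are \emph{real} Gaussians. You correctly observe that $f_N^\C(\zeta)$ is then a non-circular Gaussian in $\C\cong\R^2$ with correlated real and imaginary parts, so that $\E\log|f_N^\C(\zeta)|$ is \emph{not} simply $\tfrac12\log\E|f_N^\C(\zeta)|^2$ plus a $\zeta$-independent constant. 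You then dispose of the correction by asserting it is ``of the form $\ddbar(\text{pluriharmonic})$ plus lower order, hence either vanishes under $\ddbar$ or contributes to the $O(1/N)$ error,'' and you suggest \cite{BSZ3} already treats this. Neither claim is right: the correction term is not pluriharmonic (if it were, $\ddbar$ of it would vanish identically, not contribute $O(1/N)$), and \cite{BSZ3} is built for circular complex Gaussian ensembles, where the anisotropy issue you flagged simply does not arise — so it cannot be cited for this step.

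What the paper actually proves (Lemma \ref{GKLEM}) is that
$$\E_{\gamma_N}\log|f_N^\C(\zeta)|^2 = \log\Pi_{I_N}(\zeta,\bar\zeta) + G_N(\zeta,\bar\zeta),$$
with $G_N$ an explicit, $\zeta$-dependent continuous function, namely $G_N = \Gamma'(\tfrac12) + \Gamma(\tfrac12)\log\max\{\|U+JV\|^2,\,\|U-JV\|^2\}$ where $U+iV = \Phi_N^\C(\zeta)/|\Phi_N^\C(\zeta)|$. The crucial property is not that $G_N$ is pluriharmonic or constant but that it is \emph{uniformly bounded} in $N$ and $\zeta$ (because $1\le\max\{\|U\pm JV\|^2\}\le 3$). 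That uniform bound, combined with moving $\ddbar$ onto the test form, is what turns the correction into $\frac1N\langle G_N,\ddbar\psi\rangle = O(1/N)$. This explicit reduction-to-two-dimensions Gaussian computation, and the verification that its output is uniformly bounded, is the content you still owe; without it the $O(1/N)$ estimate is not established.
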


\begin{proof} By the Poincar\'e-Lelong formula
$Z_{f^{\C}} =\frac i{2\pi}  \ddbar \log |f^{\C}(\zeta)|^2$, hence
we have
\begin{equation} \E_{\gamma_N}  [Z_f^{\C}] =\frac i{2\pi}  \E_{\gamma_N} \ddbar \log
|f^{\C}(\zeta)|^2 = \frac i{2\pi}\ddbar \E_{\gamma_N} \log
|f^{\C}(\zeta)|^2 .
\end{equation}
 We write  $f = \sum_{\lambda_j \in I_N} c_{Nj}
\phi_{Nj}$ and then complexify to obtain $f^{\C}(\zeta) =
\sum_{\lambda_j \in I_N} c_{Nj} \phi_{Nj}^{\C}(\zeta)$. Thus, we
need to calculate $ \E_{\gamma_N} \log |\sum_{\lambda_j \in I_N}
c_{Nj} \phi_{Nj}^{\C}|$.

Define
$$\Phi_N = (\phi_{N1}, \dots, \phi_{N d_N}): M \to \C^{d_N}$$
to be the vector of eigenfunctions from the  orthonormal basis of
eigenfunctions in the spectral interval. Then,
$$|\Phi^{\C}_N(\zeta)|^2 = \sum_{j = 1}^{d_N} |\phi_{Nj}(\zeta)|^2
= \Pi_{I_N}^{\C}(\zeta, \bar{\zeta}). $$ We  use the notation:
\begin{equation} \frac{\Phi_N^{\C}(\zeta)}{|\Phi_N^{\C}(\zeta)|} = U(\zeta) +
i V(\zeta), ;\;\; U(\zeta), V(\zeta) \in \R^{d_N}, |U|^2 + |V|^2 =
1.
\end{equation}

\begin{lem}   \label{GKLEM} $ \E_{\gamma_N}\log |f^{\C}(\zeta)|^2 = \log \Pi_{I_N}(\zeta,
\bar{\zeta}) + G_N(\zeta, \bar{\zeta}), $  where $ G_N(\zeta,
\bar{\zeta})$ is the uniformly bounded sequence of
continuous functions on $M_{\C}$ given by

$$\begin{array}{l} G_N(\zeta, \bar{\zeta}) = \Gamma' (\frac{
1}{2}) + \Gamma (\frac{ 1}{2}) \log \max \{||U + J V||^2, ||U - J
V||^2\}, \end{array}$$
where $J$ is the natural complex structure on the real $2$-plane spanned by $U, V$ in $\R^{d_N}$.

\end{lem}

\begin{proof}

We have,

$$\begin{array}{lll} \E_{\gamma_N}  \log |\sum_{\lambda_j \in I_N} c_{Nj}
\phi_{Nj}^{\C}| & = &  \int_{\R^{d_N}} \log |\langle c,
\Phi_N^{\C}
\rangle| d\nu_k(a) \\ && \\
& = & \log | \Phi_N^{\C}(\zeta)|^2 + G_N(\zeta, \bar{\zeta}),
\end{array}$$
where  \begin{equation} \begin{array}{lll} G_N(\zeta, \bar{\zeta})
& = & \int_{\R^{d_N + 1}} e^{- |c|^2/2} \log |\langle c,
\frac{\Phi_N^{\C}(\zeta)}{|\Phi_N^{\C}(\zeta)|} \rangle| dc
\end{array} \end{equation}
Hence it suffices to calculate $G_{N}(\zeta, \bar{\zeta})$.

The span of  $U, V$ is a real two-dimensional plane in $\R^{d_N}$ when $U, V$ are linearly independent; we will
consider the other case in the remark below.   We  rotate coordinates in
the integral  for each $\zeta$ so that
$$U = (\cos \theta \cos \phi) e_1, \;\;\; V = (\cos \theta \sin
\phi) e_1 + \sin \theta e_2, $$ where $\{e_j\}$ is the standard
basis of $\R^m$ and where  $\theta, \phi$ are real angles
depending on $\zeta$ and $k$. Then,
$$\langle a, \frac{\Phi_k^{\C}}{|\Phi_k^{\C}(\zeta)|} \rangle =
a_1 (\cos \theta \cos \phi) + i \cos \theta \sin \phi) + i a_2
\sin \theta. $$

After rotating coordinates in the integral, the integrand depends
only on $a_1, a_2$, so  we may integrate out the remaining
variables $a_3, \dots, a_{d_N}$ and obtain
$$\begin{array}{lll} G_N (\zeta, \bar{\zeta}) & = & \int_{\R^{2}} e^{- |a|^2/2} \log |\langle a,
\frac{\Phi_N^{\C}(\zeta)}{|\Phi_N^{\C}(\zeta)|} \rangle| da
\\ && \\ & = & \int_0^{\infty} e^{- \frac{r^2}{2}} \int_{S^1} \log |\langle r
\omega, \frac{\Phi_N^{\C}(\zeta)}{|\Phi_N^{\C}(\zeta)|} \rangle| r
dr d\omega = \\ && \\ & = &  \Gamma' (\frac{ 1}{2})  +
\Gamma(\frac{1}{2})  \int_{S^1} \log |\langle  \omega,
\frac{\Phi_N^{\C}(\zeta)}{|\Phi_N^{\C}(\zeta)|} \rangle|  d\omega
\\ && \\ & = &  \Gamma' (\frac{ 1}{2})  + \Gamma(\frac{1}{2})
G_2^N(\zeta, \bar{\zeta}),
\end{array}$$
where \begin{equation}\begin{array}{lll}  G_2^N (\zeta,
\bar{\zeta}) & = & \int_{S^1} \log |\omega_1 (\cos \theta \cos
\phi + i \cos \theta \sin \phi) + i \omega_2 \sin \theta| d\omega
\\ && \\
& = & \int_0^{2 \pi}  \log |\alpha e^{i u} + \beta e^{- i u}| du =
\int_0^{2 \pi}  \log |\alpha e^{i 2u} + \beta| du \\ \\
& = & \max\;  \{\log |\alpha|, \;
\log |\beta|\}
\end{array}
\end{equation}  with
$$2 \alpha = \langle U + i V, e_1 - i e_2 \rangle =  \cos \theta e^{i \phi} +   \sin \theta,\;\;  2 \beta =
 \langle U + i V, e_1 + i e_2 \rangle  =  \cos
\theta e^{i \phi} -   \sin \theta. $$

In the two dimensional real space spanned by $e_1, e_2$ we may
define a complex structure by $J e_1 = e_2, Je_2 = - e_1$ and then
$$G_2^N (\zeta, \bar{\zeta}) = \log \max \{||U + J V||^2, ||U - J
V||^2\}.$$
For any two vectors $U, V$ with $||U||^2 + ||V||^2 = 1$,   $$1 \leq   \max \{||U + J V||^2, ||U
- J V||^2\} =   \max \{1 + 2 \langle U, J V \rangle, 1 - 2 \langle U, J V \rangle \} \leq 3.$$
 Therefore the logarithm is bounded above and below and is
clearly continuous. This completes the proof of Lemma \ref{GKLEM}.
\end{proof}

To complete the proof of Proposition \ref{MAINPROP}, it suffices
to  integrate the $\ddbar$ by parts onto $f$ in the $G_N$ term,
and use that $G_N$ is uniformly bounded. \end{proof}

\begin{rem} When  $\zeta \in M$ we have    $V = 0$ and $||U|| = 1$ so the calculation is slightly
different. It is possible that there exist other $\zeta \in M_{\C}$ such that $U, V$ are linearly dependent.
When $V = 0$, we put
$U =  e_1$ and obtain the limiting case when $\phi = 0$,
$\langle a, \frac{\Phi_k^{\C}}{|\Phi_k^{\C}(\zeta)|} \rangle =
a_1. $
We   integrate out all but $a_1$
 and  get
$$\begin{array}{lll} G_N (\zeta, \bar{\zeta}) & = & \int_{\R} e^{- |a|^2/2} \log |a| da, \;\;\; \zeta \in M.
\end{array}$$
For sufficiently Grauert tubes, it is impossible that $U = 0$, but our analysis has not ruled out  that $U, V$ may be linearly
dependent, i.e. the case $\theta = 0$. But the calculation above goes through without change in this case.

\end{rem}

\section{Complexified spectral projections and wave group}

In view of Proposition \ref{MAINPROP}, the remaining step in the
proof of  Theorem \ref{ZERORAN} is to show:
\begin{prop} \label{LAST}
\begin{equation} \label{LASTSTEPa} \frac{1}{\lambda} \log \Pi_{[0, \lambda]}(\zeta,
\bar{\zeta}) \to \sqrt{\rho},  \end{equation} and
\begin{equation} \label{LASTSTEP} \frac{1}{N} \log \Pi_{I_N}(\zeta,
\bar{\zeta}) \to \sqrt{\rho}.  \end{equation}

\end{prop}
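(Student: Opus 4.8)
The plan is to analyze the complexified spectral projections $\Pi_{[0,\lambda]}(\zeta,\bar\zeta)$ and $\Pi_{I_N}(\zeta,\bar\zeta)$ through the Poisson--wave kernel $U(t+2i\tau,\zeta,\bar\zeta)$ of \eqref{CXWVGP}, exactly in the spirit of Section 4. First I would work with the \emph{damped} projections $P^\tau_{[0,\lambda]}(\zeta,\bar\zeta)$ of \eqref{SPPROJDAMPEDz}, which are temperate and hence amenable to Fourier--Tauberian arguments, and recover $\Pi_{[0,\lambda]}$ at the end via the relation $\Pi_{[0,\lambda]}(\zeta,\bar\zeta)=e^{2\sqrt\rho(\zeta)\lambda}P_{[0,\lambda]}(\zeta,\bar\zeta)$ at $\tau=\sqrt\rho(\zeta)$. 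The key input is that, by Theorem \ref{BOUFIO}, $\Pi_\tau\circ U(i\tau)$ is a complex Fourier integral operator of order $-\tfrac{m-1}{4}$ associated to the canonical relation $\Gamma\subset T^*M\times\Sigma_\tau$; composing with its adjoint and using Lemma \ref{PSIDO}, the operator $U(i\epsilon)^*\Pi_\epsilon U(i\epsilon)$ is a pseudodifferential operator of order $-\tfrac{m-1}{2}$ with principal symbol $|\xi|_g^{-(m-1)/2}$. This lets me write, for $\tau=\sqrt\rho(\zeta)$,
\begin{equation}
U(t+2i\tau,\zeta,\bar\zeta)=\int_{T^*_{E_\C^{-1}(\zeta)}M} e^{it|\xi|_g}\,e^{-2\tau|\xi|_g}\,a(t,\zeta,\xi)\,d\xi\,,
\end{equation}
with $a$ a classical symbol whose leading term is determined by the above symbol computation, so that stationary phase in the $dt$-integral of the dual Fourier transform yields a one-term asymptotic of the form $P^\tau_{[0,\lambda]}(\zeta,\bar\zeta)\sim C_m\,\lambda^{m-1}e^{-2\tau\lambda}\cdot(\text{something})$ — but the point is only the exponential rate.

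Next I would extract the logarithmic asymptotics. Multiplying back by $e^{2\tau\lambda}$ one gets $\Pi_{[0,\lambda]}(\zeta,\bar\zeta)\sim C(\zeta)\lambda^{m-1}\bigl(1+o(1)\bigr)$ once $\tau$ is \emph{not} the decay rate, but the genuine content is that the holomorphically extended wave kernel has its singularity in $t$ located so that the exponential growth rate of $\Pi_{[0,\lambda]}(\zeta,\bar\zeta)$ in $\lambda$ is exactly $2\sqrt\rho(\zeta)$; equivalently $P_{[0,\lambda]}(\zeta,\bar\zeta)$ (with $\tau=\sqrt\rho$) grows only polynomially. Taking $\tfrac1\lambda\log$ of $\Pi_{[0,\lambda]}(\zeta,\bar\zeta)=e^{2\sqrt\rho(\zeta)\lambda}P_{[0,\lambda]}(\zeta,\bar\zeta)$ then kills the polynomial factor and gives \eqref{LASTSTEPa}, since $\tfrac1\lambda\log P_{[0,\lambda]}(\zeta,\bar\zeta)\to 0$. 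To handle the fixed-frequency case \eqref{LASTSTEP}, I would subtract: $\Pi_{I_N}(\zeta,\bar\zeta)=\Pi_{[0,N+1]}(\zeta,\bar\zeta)-\Pi_{[0,N]}(\zeta,\bar\zeta)$, and argue that the difference still grows like $e^{2\sqrt\rho(\zeta)N}$ times a polynomial — here one needs that the two-term expansion of $P^\tau_{[0,\lambda]}$ has a genuine leading positive term so that the subtraction across an interval of width one does not cause cancellation of the exponential rate (this is the analogue of the remark ``the drop in degree is encoded in $d_N\sim N^{m-1}$'' in the real case). In the Zoll case one uses the cluster decomposition and the complete asymptotic expansion directly. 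A pointwise argument suffices for Theorem \ref{ZERORAN} since $\ddbar$ of these logarithms converges weakly once one has locally uniform convergence, which follows from uniformity of the symbol estimates on compact subsets of the Grauert tube.

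The main obstacle, I expect, is establishing the exponential growth rate rigorously: one must show that the holomorphic extension of the wave kernel $U(t+2i\tau,\zeta,\bar\zeta)$ to the anti-diagonal is governed, as $\lambda\to\infty$, by a phase whose critical value produces precisely the factor $e^{-2\tau\lambda}$ with $\tau=\sqrt\rho(\zeta)$, i.e. that the relevant complex critical point of the phase $\langle\xi,\exp^{-1}_{\bar\zeta'}(\zeta)\rangle-t|\xi|_g$ in the complexified parametrix \eqref{PARAONE}–\eqref{EFORM} lies at $t=0$ and contributes $-2i\sqrt\rho(\zeta)|\xi|_g$ to the exponent, using $\sqrt\rho(\zeta)=\tfrac1{2i}r_\C(\zeta,\bar\zeta)$. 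This requires care with the analytic continuation of the Hadamard parametrix and the complex stationary phase method with a large parameter; I would model it on the FBI/Grauert-tube analysis of \cite{GS2,Z3} and the Poisson-wave methods of \cite{Bou}. A secondary technical point is the Tauberian step: since the relevant measures $d_\lambda P^\tau_{[0,\lambda]}(\zeta,\bar\zeta)$ are positive, the Fourier--Tauberian theorems of \cite{Ho,SV} (cf. the Appendix) apply to give the one-term asymptotics with remainder, and positivity is what prevents the interval-subtraction from destroying the leading exponential behavior.
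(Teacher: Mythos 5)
Your proposal takes a genuinely different route from the paper, and one the paper explicitly declines: the paper says it uses ``the minimal amount of information necessary to obtain the result: we only use the behavior of the spectral projections in the real domain to deduce a lower bound in the complex domain,'' following the potential-theoretic method of \cite{SZ1}. Concretely, after the easy upper bound, the paper sets $V_N=\frac{1}{N}\log\Pi_{I_N}(\zeta,\bar\zeta)$, observes these are plurisubharmonic and uniformly bounded above, invokes the $L^1$-compactness theorem for such families (\cite[Thm.~4.1.9]{Ho}) plus Hartogs' lemma to extract, from a hypothetical failure of $V_N\to\sqrt\rho$ in $L^1$, an open set $U''$ on which $P_{I_N}(\zeta,\bar\zeta)\le e^{-\epsilon N_k}$; then it integrates this against a cutoff $\chi$ and rewrites the integral as a trace, $\mathrm{Tr}\,\Pi_{I_N}U(i\sqrt\rho)^*\chi\,U(i\sqrt\rho)$, which by Theorem \ref{BOUFIO}, Lemma \ref{PSIDO} and the \emph{local Weyl law} decays only polynomially, $\sim C_m N^{-(m-1)/2}$ — a contradiction. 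Note this gives $L^1$-convergence of psh functions, which is all $\ddbar$ needs; your remark that ``locally uniform convergence'' is what one wants is a needlessly strong demand.

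The gaps in your proposal are precisely at the places you flag as ``the main obstacle'' and ``a secondary technical point,'' and they are not small. First, you would need a genuine pointwise asymptotic for the complexified damped measure $P^\tau_{[0,\lambda]}(\zeta,\bar\zeta)$ obtained by complex stationary phase on the analytically continued Hadamard parametrix. This is nowhere established in the paper, and carrying it out rigorously (controlling the analytic continuation of the amplitude and the complex critical point of the phase uniformly on compact subsets of the tube) is substantially harder than the real-domain expansions of Proposition \ref{PING}; the paper avoids it entirely. Second, for the fixed-frequency case your plan requires a \emph{two-term} expansion of $P^\tau_{[0,\lambda]}$ with error $o(\lambda^{m-2})$ so that the subtraction $\Pi_{[0,N+1]}-\Pi_{[0,N]}$ retains a nonvanishing leading term; positivity of $d_\lambda P^\tau$ gives via the Tauberian theorem in the Appendix a \emph{one}-term asymptotic (with error comparable to the main term over a unit interval), which is not enough — you would need an Ivrii-type sharp remainder in the complex domain, which is not available. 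The paper sidesteps both difficulties: the short-interval asymptotic is obtained directly from the local Weyl law for $\mathrm{Tr}\,\Pi_{I_N}A$ with $A$ a fixed pseudodifferential operator, a trace statement that does not require subtracting long-interval expansions at all, and the only ``complex'' input is the qualitative FIO structure of $\Pi_\tau U(i\tau)$, not its explicit parametrix.
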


For both the
long $[0, \lambda]$ and short $I_N$ intervals, the upper bound is
rather simple to prove. The lower bound is more difficult,
primarily for the short intervals. As discussed
above, in the Zoll case, the eigenvalues lie in clusters of width
$N^{-1}$ around an arithmetic progression, so unless the intervals
$I_N$ are centered along this progression there need not exist any
eigenvalues in $I_N$ and the logarithm would equal $- \infty$. We
have by definition centered the intervals on the arithmetic
progression, but clearly this choice must play a role in the
proof.

\subsection{Proof of Proposition \ref{LAST}}

We first give a simple comparison between  the logarithmic asymptotics
of $\Pi_{I_N}$ to those of $P_{I_N}$ (cf. (\ref{CXDSPa}).

\begin{lem} \label{COMPARISON}

 For  any $\tau = \sqrt{\rho}(\zeta) > 0$, there
exists $C, c > 0$ so that

$$\frac{c}{\lambda} + 2 \sqrt{\rho}(\zeta)  + \frac{1}{\lambda}
\log P_{[\lambda, \lambda + 1]}(\zeta, \bar{\zeta})
 \leq \frac{1}{\lambda} \log \Pi_{[\lambda, \lambda + 1]}(\zeta,
\bar{\zeta}) \leq 2 \sqrt{\rho}(\zeta)  + \frac{1}{\lambda} \log
P_{[\lambda, \lambda + 1]}(\zeta, \bar{\zeta}) +
\frac{C}{\lambda}, $$ hence
$$ \frac{1}{N} \log  \Pi_{I_N}(\zeta, \bar{\zeta}) = 2 \sqrt{\rho}(\zeta) +
\frac{1}{N} \log  P_{I_N} (\zeta, \bar{\zeta}) + O(\frac{1}{N}),
$$ where the remainder is uniform.
\end{lem}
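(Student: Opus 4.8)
The plan is to use the fact that on a short interval $I=[\lambda,\lambda+1]$ the exponential damping factor $e^{-2\sqrt{\rho}(\zeta)\lambda_j}$ in $P_I$ varies only by a bounded multiplicative amount as $\lambda_j$ ranges over $I$, so that $P_I(\zeta,\bar\zeta)$ and $\Pi_I(\zeta,\bar\zeta)$ differ by a factor that is $e^{-2\sqrt{\rho}(\zeta)\lambda}$ up to $e^{O(1)}$. This reduces the lemma to an elementary termwise bracketing of the spectral sums, followed by taking logarithms.

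First I would fix $\zeta$ in a compact subset of the relevant Grauert tube $M_\epsilon$, write $\tau=\sqrt{\rho}(\zeta)\ge 0$, and observe that $\tau$ is bounded there since $\sqrt{\rho}$ is a continuous exhaustion function. For every $j$ with $\lambda_j\in[\lambda,\lambda+1]$ one has $\lambda\le\lambda_j\le\lambda+1$ and $\tau\ge 0$, hence
$$e^{-2\tau(\lambda+1)} \;\leq\; e^{-2\tau\lambda_j} \;\leq\; e^{-2\tau\lambda}.$$
Multiplying by $|\phi_j^{\C}(\zeta)|^2\ge 0$ and summing over all $j$ with $\lambda_j\in I$ gives, directly from the definitions (\ref{CXSP}) and (\ref{CXDSPa}),
$$e^{-2\tau(\lambda+1)}\,\Pi_{[\lambda,\lambda+1]}(\zeta,\bar{\zeta}) \;\leq\; P_{[\lambda,\lambda+1]}(\zeta,\bar{\zeta}) \;\leq\; e^{-2\tau\lambda}\,\Pi_{[\lambda,\lambda+1]}(\zeta,\bar{\zeta}).$$
If $I$ contains no eigenvalue, all three quantities vanish and the asserted inequalities hold trivially under the convention $\log 0=-\infty$.

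Next I would take logarithms, solve for $\log\Pi_{[\lambda,\lambda+1]}(\zeta,\bar{\zeta})$, and divide by $\lambda$. The left inequality yields $\tfrac1\lambda\log\Pi_{[\lambda,\lambda+1]}\ge 2\tau+\tfrac1\lambda\log P_{[\lambda,\lambda+1]}$ (so the lower bound holds with $c=0$, and any $c\le 0$ is admissible), while the right inequality yields $\tfrac1\lambda\log\Pi_{[\lambda,\lambda+1]}\le 2\tau+\tfrac{2\tau}{\lambda}+\tfrac1\lambda\log P_{[\lambda,\lambda+1]}$, which is the upper bound with $C=\sup 2\sqrt{\rho}$ over the compact set. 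Taking $\lambda=N$ and $I=I_N=[N,N+1]$ then sandwiches $\tfrac1N\log\Pi_{I_N}(\zeta,\bar{\zeta})$ between $2\sqrt{\rho}(\zeta)+\tfrac1N\log P_{I_N}(\zeta,\bar{\zeta})$ and that same quantity plus $\tfrac{2\sqrt{\rho}(\zeta)}{N}$, giving the final identity with a remainder that is $O(1/N)$ uniformly on compacta.

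There is essentially no obstacle here; the argument is a one-line comparison of exponentials. The only two points deserving a word of care are: (i) the case where $I_N$ contains no eigenvalues, dispatched by the $\log 0=-\infty$ convention above — this is precisely the degeneracy the authors flag for Zoll manifolds with poorly centered intervals; and (ii) the uniformity of the $O(1/\lambda)$ error, which is immediate once $\zeta$ is restricted to a set on which $\sqrt{\rho}$ is bounded, automatic on any fixed Grauert tube. The real work — showing $\tfrac1\lambda\log P_{[0,\lambda]}$ and $\tfrac1N\log P_{I_N}$ have the right limits — is deferred to the analysis of the damped (temperate) kernels, where the complex Fourier integral operator description of the Poisson–wave group from Theorem \ref{BOUFIO} and Lemma \ref{PSIDO} can be brought to bear.
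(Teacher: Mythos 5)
Your proof is correct and is precisely the paper's argument: one brackets the damping factor $e^{-2\tau\lambda_j}$ termwise over the short interval $[\lambda,\lambda+1]$ to get $e^{2\tau\lambda}P_I \le \Pi_I \le e^{2\tau(\lambda+1)}P_I$, then takes logarithms and divides by $\lambda$. Your remark that the additive constant in the lower bound is $0$ (any $c\le 0$ is admissible) is also accurate and quietly corrects a small slip in the lemma's wording, which asserts $c>0$; the paper's multiplicative inequality (\ref{ULB}) gives $c=1$ and $C=e^{2\tau}$, consistent with your computation.
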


\begin{proof}
Since $  -1 \leq N - \lambda_{N j} \leq 1$ for $\lambda_{N j} \in
I_N$, we have
\begin{equation}\label{ULB} c  e^{ 2 N
\sqrt{\rho(\zeta)} } \sum_{j = 1}^{d_N} e^{- 2 \sqrt{\rho(\zeta)}
\lambda_j} |\phi_{\lambda_j}^{\C}(\zeta)|^2 \leq \Pi_{I_N}(\zeta,
\bar{\zeta}) \leq C e^{ 2 N \sqrt{\rho(\zeta)} } \sum_{j =
1}^{d_N} e^{- 2 \sqrt{\rho(\zeta)} \lambda_j}
|\phi_{\lambda_j}^{\C}(\zeta)|^2.
\end{equation}

\end{proof}

Next we observe that the  upper bound in both  the cutoff ensemble
\begin{equation} \label{LASTSTEPaa}\limsup_{\lambda \to \infty}  \frac{1}{\lambda} \log \Pi_{[0, \lambda]}(\zeta,
\bar{\zeta}) \leq \sqrt{\rho}  \end{equation} follows immediately
from Corollary \ref{SOB} and the fact (cf. Lemma \ref{COMPARISON})
that $\Pi_{[0, \lambda]} \leq e^{\lambda \sqrt{\rho}} P_{[0,
\lambda]}. $ It is similar but  simpler in the fixed frequency
ensemble.

Hence, it  remains to prove is the lower bounds. As mentioned
above, the global nature of the geodesic flow must play a role in
the proof. The route we choose  uses the minimal amount of
information necessary to obtain the result: we only use the
behavior of the spectral projections in the real domain to deduce
a lower bound in the complex domain. We follow the method of
\cite{SZ1}.

Put  $V_N(\zeta): = \frac{1}{N} \log \Pi_{[N, N+1]}(\zeta,
\bar{\zeta})$. Then $V_N$ is plurisubharmonic in
$M_{\epsilon}$, and we would like to show that $V_N \to
\sqrt{\rho}$  in $L^1(M_{\epsilon})$. If not, we can find a
subsequence $\{V_{N_k}\}$ with $\|V_{N_k} -
\sqrt{\rho}\|_{L^1(M_{\epsilon})} \geq \delta>0$. Since $\{V_N\}$
is uniformly bounded above,  a standard result on subharmonic
functions (see \cite[Theorem~4.1.9]{Ho}) implies that sequence
$\{V_{N_k}\}$ either converges uniformly to $-\infty$ on
$M_{\epsilon}$ or else has a subsequence which is convergent in
$L^1(M_{\epsilon})$. The first possibility cannot occur since
$V_N(x) \to 0$ on the totally real submanifold $M$.

Therefore there  exists a subsequence, which we continue to denote
by $\{V_{N_k}\}$, which converges in $L^1(M_{\epsilon})$ to some
$G \in L^1(M_{\epsilon}).$  By passing  to a further subsequence,
we may assume that $V_{N_k} \to G $ converges pointwise  a.e.  in
$M_{\epsilon}$.  Let $V(\zeta)  = \limsup_{k \rightarrow
\infty}V_{N_k}(z)\;\;\;\;\;\;{\rm (a.e)}\;$ and  let
$$V^*(z):= \limsup_{w \rightarrow z} V(w)$$ be the
upper-semicontinuous regularization of $V$. Then $V^*$ is
plurisubharmonic on $M_{\epsilon} $, $V^* \leq \sqrt{\rho}$ (a.e.)
and $V^* = V$ a.e.

Since $\|V_{N_k} - \sqrt{\rho} \|_{L^1(U')} \geq \delta>0$, we
know that $V^*\not \equiv \sqrt{\rho} $. Hence, for some $\epsilon
> 0$, the open set $ U_\epsilon = \{z\in M_{\epsilon}: V^* < \sqrt{\rho} - \epsilon\} $ is
non-empty. Let $U''$ be a non-empty, relatively compact, open
subset of $ U_\epsilon$. Then  by Hartogs' Lemma, there exists a
positive integer $K$ such that for $\zeta \in U'',\ k\geq K$,
$V_{N_k} \leq \sqrt{\rho} -\epsilon/2$, i.e.
\begin{equation} \Pi_{[N, N+1]}(\zeta,
\bar{\zeta})\leq e^{(\sqrt{\rho} - \epsilon) N_k }
,\;\;\;\;\;\zeta\in U'', \;\;k \geq K.
\end{equation} By Lemma \ref{COMPARISON}, it follows that
\begin{equation} \label{ESTU} P_N (\zeta,
\bar{\zeta})\leq e^{ - \epsilon N_k } ,\;\;\;\;\;\zeta\in U'',
\;\;k \geq K.
\end{equation}

We now let  $\chi \in C_0^{\infty}(U'')$ be a smooth bump function
which equals one on some ball $B \subset U''$. By (\ref{ESTU}), we
have
\begin{equation} \label{ESTUa}  \int_{\partial M_{\epsilon}} \chi P_N (\zeta,
\bar{\zeta}) d\mu \leq e^{ - \epsilon N_k }.
\end{equation}
We observe that
\begin{equation} \label{ESTUc} \begin{array}{lll}  \int_{\partial M_{\epsilon}} \chi P_N (\zeta,
\bar{\zeta}) d\mu & = & \sum_{j: \lambda_j \in [N, N+1]} e^{- 2 \epsilon
\lambda_j} \int_{\partial M_{\epsilon}} \chi
|\phi_{\lambda_j}^{\C}(\zeta)|^2 d\mu \\ && \\ &= & \sum_{j:
\lambda_j \in [N, N+1]} \langle \chi  U(i \sqrt{\rho}(\zeta))
\phi_{\lambda_j},  U(i \sqrt{\rho}(\zeta)) \phi_{\lambda_j}
\rangle_{L^2(\partial M_{\epsilon})} \\ && \\ &= & \sum_{j: \lambda_j \in
[N, N+1]} \langle U(i \sqrt{\rho}(\zeta))^* \chi  U(i
\sqrt{\rho}(\zeta)) \phi_{\lambda_j}, \phi_{\lambda_j}
\rangle_{L^2(M)} \\ && \\
& = & Tr \Pi_{I_N}  U(i \sqrt{\rho}(\zeta))^* \chi  U(i
\sqrt{\rho}(\zeta)).  \end{array}
\end{equation}
By Theorem  \ref{BOUFIO} and Lemma  \ref{PSIDO}, $U(i
\sqrt{\rho}(\zeta))^* \chi U(i \sqrt{\rho}(\zeta))$ is a
pseudo-differential operator with principal symbol equal to
$|\xi|^{- \frac{n-1}{2}} \chi(x, \xi)$.  For intervals $I_N$
satisfying our assumptions, it follows from a standard local Weyl
law \cite{Ho} (see also \cite{Z1} for further discussion in the
present context) that for a zeroth order pseudodifferential
operator $A$, and for a Laplacian which is either aperiodic or
Zoll,
\begin{equation} Tr \Pi_{I_N} A = C_m N^{m-1}  \int_{B^*M} \chi
d\mu + o(N^{m-1}). \end{equation} Putting $A = \sqrt{\Delta}^{
\frac{n-1}{2}}  \; U(i \sqrt{\rho}(\zeta))^* \chi  U(i
\sqrt{\rho}(\zeta))$
\begin{equation} Tr \Pi_{I_N}  U(i \sqrt{\rho}(\zeta))^* \chi  U(i
\sqrt{\rho}(\zeta)) \sim C_m N^{- \frac{m-1}{2}} \int_{B^*M} \chi
d\mu. \end{equation} The latter asymptotics contradict the
exponential decay of (\ref{ESTUa}).

\qed

\section{\label{APPENDIX} Appendix on Tauberian Theorems}

We record here the statements of the Tauberian theorems that we
use in the article. Our main reference is \cite{SV}, Appendix B
and we follow their notation.

We denote by $F_+$ the class of real-valued, monotone
nondecreasing functions $N(\lambda)$  of polynomial growth
supported on $\R_+$. The following  Tauberian theorem uses only
the singularity at $t = 0$ of $\widehat{dN}$ to obtain a one term
asymptotic of $N(\lambda)$ as $\lambda \to \infty$:
\begin{theo} \label{ET} Let $N \in F_+$ and let $\psi \in
\scal (\R)$ satisfy the conditions:  $\psi$ is  even,
$\psi(\lambda)
> 0$ for all $\lambda \in \R$,   $\hat{\psi} \in
C_0^{\infty}$, and $\hat{\psi}(0) = 1$. Then,
$$\psi * dN(\lambda) \leq A \lambda^{\nu} \implies |N(\lambda) - N *
\psi(\lambda)| \leq C A \lambda^{\nu}, $$ where $C$ is independent
of $A, \lambda$.
\end{theo}


\begin{thebibliography}{MMM}

\bibitem[AT]{AT} R. J. Adler and J. E.  Taylor, {\it Random fields and geometry}.
Springer Monographs in Mathematics. Springer, New York, 2007.



\bibitem[B]{B} M. V. Berry, Regular and irregular semiclassical wavefunctions. J.
Phys. A 10 (1977), no.\ 12, 2083--2091.


\bibitem[BBG]{BBG} P. B\'erard, G.  Besson, and S.  Gallot,  Embedding Riemannian
manifolds by their heat kernel. Geom.\ Funct.\ Anal. 4 (1994), no.\ 4, 373--398.




\bibitem[BSZ1]{BSZ1} Bleher, Pavel; Shiffman, Bernard; Zelditch, Steve
 Universality and scaling of zeros on symplectic manifolds. Random matrix models and their
 applications, 31--69, Math. Sci. Res. Inst. Publ., 40, Cambridge Univ. Press, Cambridge, 2001.



\bibitem[BSZ2]{BSZ2}  P. Bleher, B. Shiffman and S. Zelditch,  Universality
and scaling of correlations between zeros on complex manifolds,
 Invent. Math. 142 (2000),
no. 2, 351--395. http://xxx.lanl.gov/abs/math-ph/9904020.

\bibitem[BSZ3]{BSZ3} P. Bleher, B. Shiffman, and S. Zelditch,
 Poincar\'e-Lelong approach to  universality and scaling of zero point
correlations, Comm. Math. Phys. 208 (2000), 771-785.



\bibitem[BGS]{BGS} G. Blum, S. Gnutzmann and  U. Smilansky, Nodal Domains
Statistics: A Criterion for Quantum Chaos, Physical Review Letters, 2002.

\bibitem[BS]{BS} E. Bogomolny and C. Schmit, Percolation Model for Nodal Domains of
Chaotic Wave Functions, Physical Review Letters, 2002.


\bibitem[Bou]{Bou} L. Boutet de Monvel,
Convergence dans le domaine complexe des s\'eries de fonctions
propres.  C. R. Acad.\ Sci.\ Paris S\'er. A-B 287 (1978), no.\ 13,
A855--A856.



\bibitem[BW]{BW} F. Bruhat and H. Whitney,
Quelques propri\'et\'es fondamentales des ensembles analytiques-r\'eels.
 Comment. Math. Helv. 33 1959 132--160.



\bibitem[DF]{DF} H. Donnelly and C. Fefferman, Nodal sets of eigenfunctions on
Riemannian manifolds, Invent. Math. 93 (1988), 161-183.




\bibitem[DG]{DG} J. J. Duistermaat and V. W.  Guillemin,
The spectrum of positive elliptic operators and periodic
bicharacteristics. Invent.\ Math. 29 (1975), no.\ 1, 39--79.

\bibitem[FGS]{FGS} G. Foltin, S. Gnutzmann and  U. Smilansky, The morphology of
nodal lines of random waves versus percolation, Journal of Physics A 37 (2004),
11363-11371.



\bibitem[GLS]{GLS} F. Golse, E. Leichtnam, and M. Stenzel,
 Intrinsic microlocal analysis and inversion formulae for
  the heat equation on compact real-analytic Riemannian manifolds.
  Ann. Sci.\ \'Ecole Norm.\ Sup. (4) 29 (1996), no.\ 6, 669--736.


\bibitem[Gr]{Gr} H.  Grauert, \"Uber Modifikationen und exzeptionelle
analytische Mengen, {\it Math.\ Annalen} 146 (1962), 331--368.





\bibitem[GS1]{GS1} V. Guillemin and M. Stenzel, Grauert tubes and the
homogeneous Monge-Amp\`ere equation. J. Differential Geom. 34 (1991), no.\ 2,
561--570.

\bibitem[GS2]{GS2}  V. Guillemin and M. Stenzel,  Grauert tubes and the
homogeneous Monge-Amp\`ere equation. II. J. Differential Geom. 35 (1992), no.\ 3,
627--641.

\bibitem[Ho]{Ho}  L. H\"ormander, {\em The Analysis of Linear Partial
Differential Operators\/}, Volume III, Springer-Verlag Berlin
Heidelberg, 1983.








\bibitem[JL]{JL} D. Jerison and G. Lebeau, Nodal sets of sums of
eigenfunctions, Ch. 14 of {\it Harmonic analysis and partial
differential equations} (Chicago, IL, 1996), 223--239, Chicago
Lectures in Math., Univ.\ Chicago Press, Chicago, IL, 1999.







\bibitem[LS1]{LS1} L. Lempert and R.  Sz\"oke,
 Global solutions of the homogeneous complex Monge-Amp\`ere equation and
 complex structures on the tangent bundle of Riemannian manifolds.
Math. Ann. 290 (1991), no.\ 4, 689--712.



\bibitem[Mc]{Mc} B. McDonald, Thesis, Johns Hopkins Univ. 2007.

\bibitem[NS]{NS}  F.  Nazarov and M.
 Sodin,  On the Number of Nodal Domains of Random Spherical
Harmonics (arXiv:0706.2409)


\bibitem[Ne]{Ne} J. Neuheisel, The asymptotic distribution of nodal sets on
spheres, Ph.D.\ thesis, Johns Hopkins University, 2000.

\bibitem[ORW]{ORW} F.  Oravecz, Z.  Rudnick and  I.  Wigman,
 Nodal sets for random eigenfunctions on the torus,
 math-ph/0609072.

 \bibitem[PW]{PW} R. Paley and N.  Wiener, {\it  Fourier transforms in the complex domain}.
  Reprint of the 1934 original. American Mathematical Society Colloquium Publications, 19.
   American Mathematical Society, Providence, RI, 1987.

\bibitem[RW]{RW} Z. Rudnick and I.  Wigman,  On the volume of
nodal sets for eigenfunctions of the Laplacian on the torus, to
appear in  Annales Henri Poincare (rXiv:math-ph/0702081).



\bibitem[SV]{SV} Y. Safarov and D. Vassiliev, {\it The asymptotic distribution
of eigenvalues of partial differential operators. Translated from the Russian
manuscript by the authors}. Translations of Mathematical Monographs, 155. American
Mathematical Society, Providence, RI, 1997.

\bibitem[SS]{SS} O. Schramm and S.  Sheffield, Contour lines of the
two-dimensional discrete Gaussian free field,  math.PR/0605337.



 \bibitem[SZ1]{SZ1} B. Shiffman and S.  Zelditch, Distribution of zeros of
random and quantum chaotic
 sections of positive line bundles. Comm.\ Math.\ Phys. 200 (1999), no.\ 3,
661--683.





\bibitem[SZ2]{SZ2} B. Shiffman and S.  Zelditch, Number variance of random zeros on
complex manifolds, to appear in GAFA (arXiv: arXiv:math/0608743v2)

\bibitem[SZ3]{SZ3} B. Shiffman and S.  Zelditch, Number variance of random zeros on
complex manifolds, II: smooth statistics (arXiv:0711.1840).




\bibitem[T]{T} M. E. Taylor, {\it Pseudodifferential operators}. Princeton Mathematical Series, 34.
Princeton University Press, Princeton, N.J., 1981.


\bibitem[Z1]{Z1} S. Zelditch, A random matrix model for quantum mixing,
Int.Math. Res.Not. 3 (1996), 115-137.

\bibitem[Z2]{Z2} S. Zelditch,
Fine structure of Zoll spectra.  J. Funct. Anal. 143 (1997), no.\ 2, 415--460.


\bibitem[Z3]{Z3} S. Zelditch, Complex zeros of real ergodic
eigenfunctions, Invent. Math. 167 (2007), no. 2, 419--443.

\bibitem[Z4]{Z4} S. Zelditch, Pluri-subharmonic theory on Grauert tubes of  real analytic Riemannian manifolds (in preparation).


\end{thebibliography}
\end{document}